\documentclass{article}
\usepackage{graphicx} 
\usepackage[english]{babel}

\usepackage[letterpaper,top=2cm,bottom=2cm,left=2cm,right=2cm,marginparwidth=1.75cm]{geometry}
\usepackage{csquotes}
\usepackage[colorlinks=true, allcolors=blue]{hyperref}

\setlength{\arrayrulewidth}{0.5mm}

\usepackage{chngcntr}
\usepackage{makecell}
\usepackage{graphicx}
\usepackage{longtable}
\usepackage[nottoc]{tocbibind}
\usepackage{enumitem}
\usepackage{lscape}
\graphicspath{ {C:/Users/steve/Documents/4H project} }
\usepackage{amsmath}
\usepackage{amssymb}
\usepackage{bbm}
\usepackage{float}
\usepackage{amsfonts}
\usepackage{amsthm}
\usepackage{listings}
\usepackage{xcolor}
\usepackage{mwe}
\definecolor{codegreen}{rgb}{0,0.6,0}
\definecolor{codegray}{rgb}{0.5,0.5,0.5}
\definecolor{codepurple}{rgb}{0.58,0,0.82}
\definecolor{backcolour}{rgb}{0.95,0.95,0.92}

\lstdefinestyle{mystyle}{
    backgroundcolor=\color{backcolour},   
    commentstyle=\color{codegreen},
    keywordstyle=\color{magenta},
    numberstyle=\tiny\color{codegray},
    stringstyle=\color{codepurple},
    basicstyle=\ttfamily\footnotesize,
    breakatwhitespace=false,         
    breaklines=true,                 
    captionpos=b,                    
    keepspaces=true,                 
    numbers=left,                    
    numbersep=5pt,                  
    showspaces=false,                
    showstringspaces=false,
    showtabs=false,                  
    tabsize=2
}

\lstset{style=mystyle}
\usepackage{mathtools}
\usepackage{lscape}
\usepackage{amssymb}
\usepackage{caption}
\usepackage{array}
\usepackage[format=plain,
            labelfont=it,
            textfont=it]{caption}
\usepackage{tikz-cd}
\usepackage{mathdots}
\usetikzlibrary{decorations.markings}
\tikzset{negated/.style={
        decoration={markings,
            mark= at position 0.5 with {
                \node[transform shape] (tempnode) {$\backslash$};
            }
        },
        postaction={decorate}
    }
}
\usepackage{bm}

\newcolumntype{M}[1]{>{\centering\arraybackslash}m{#1}}
\newcolumntype{N}{@{}m{0pt}@{}}

\newtheorem{theorem}{Theorem}[section]
\newtheorem*{theorem*}{Theorem A}
\newtheorem{cor}[theorem]{Corollary}
\newtheorem{lemma}[theorem]{Lemma}

\newtheorem{example}[theorem]{Example}
\theoremstyle{definition}

\newtheorem{definition}[theorem]{Definition}
\newtheorem{conjecture}[theorem]{Conjecture}

\newcommand{\nth}{\textup{th}}
\newcommand{\N}{\mathbb{N}}

\newcommand{\R}{\mathbb{R}}

\newcommand{\F}{\mathbb{F}}

\newcommand{\Z}{\mathbb{Z}}
\newcommand{\K}{\mathbb{K}}

\newcommand{\intdist}[1]{|\langle#1\rangle|}

\numberwithin{equation}{section}
\title{Counterexamples to the $p(t)$-adic Littlewood Conjecture Over Small Finite Fields}
\date{}
\author{Samuel Garrett, Steven Robertson}
\begin{document}
\maketitle
\begin{abstract}
    \noindent In 2004, de Mathan and Teulié stated the $p$-\textit{adic Littlewood Conjecture} ($p$-$LC$) in analogy with the classical Littlewood Conjecture. Given a field $\K$ and an irreducible polynomial $p(t)$ with coefficients in $\K$, $p$-$LC$ admits a natural analogue over function fields, abbreviated to $p(t)$-$LC$ (and to $t$-$LC$ when $p(t)=t$).\\
    
    \noindent In this paper, an explicit counterexample to $p(t)$-$LC$ is found over fields of characteristic 5. This is one example of an infinite family of Laurent series that are conjectured to disprove $p(t)$-$LC$ over all fields of odd characteristic. This fills a gap left by a breakthrough paper from Adiceam, Nesharim and Lunnon (2022) in which they conjecture $t$-$LC$ does not hold over all complementary fields of characteristic $p\equiv 3\mod 4$ and prove this in the case $p=3$. Supported by computational evidence, this provides a complete picture on how $p(t)$-$LC$ is expected to behave over all fields with characteristic not equal to 2. Furthermore, the counterexample to $t$-$LC$ over fields of characteristic 3 found by Adiceam, Nesharim and Lunnon is proven to also hold over fields of characteristic 7 and 11, which provides further evidence to the aforementioned conjecture.\\
    
    \noindent Following previous work in this area, these results are achieved by building upon combinatorial arguments and are computer assisted. A new feature of the present work is the development of an efficient algorithm (implemented in Python) that combines the theory of automatic sequences with Diophantine approximation over function fields. This algorithm is expected to be useful for further research around Littlewood-type conjectures over function fields. 
\end{abstract}
\section{Introduction}
Let $\alpha$ be a real number and let $|\alpha|$ denote its usual absolute value. Define the \textbf{distance to the nearest integer} of $\alpha$ as $|\langle\alpha\rangle|=\min_{n\in\Z}\{|\alpha-n|\}$. The famous \textbf{Littlewood Conjecture} (1930's) states that for any pair of real numbers $(\alpha,\beta)$, \begin{equation}\label{LC}
    \liminf_{n\to\infty}|n|\cdot |\langle n\alpha\rangle|\cdot\intdist{n\beta}=0
\end{equation}
\noindent From now on, ``Littlewood's Conjecture'' is abbreviated to $LC$. In 2006, Einsiedler, Katok and Lindenstrauss \cite{EKL} proved that the set of counterexamples to (\ref{LC}) has Hausdorff dimension equal to zero. This remains the most significant progress to date towards proving $LC$. In 2004, de Mathan and Teulié \cite{padic} formulated a similar conjecture that has become known as the $p$-\textbf{adic Littlewood Conjecture} ($p$-$LC$). Recall that, for a given prime $p$ and natural number $n$, the $p$-\textbf{adic norm} of $n$ is defined as \begin{equation}\label{p-adic}
    |n|_p=p^{-\nu_p(n)}, ~~\text{ where }~~\nu_p(n):=\max_{i\in\N}\{i: p^i|n\}.
\end{equation}
\begin{conjecture}[$p$-$LC$, de Mathan and Teulié, 2004]
    For any real number $\alpha$ and any prime $p$, \begin{equation}
        \inf_{n\in\N\backslash\{0\}}|n|\cdot |n|_p\cdot \intdist{n\alpha} = 0.
    \end{equation}
\end{conjecture}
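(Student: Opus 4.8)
The displayed equality is de Mathan and Teulié's \emph{conjecture} itself — it is open — so what follows is not a proof but the standard line of attack, together with where it stalls and why the paper turns instead to the function-field analogue. Throughout, assume $\alpha$ irrational (rational $\alpha=a/b$ is trivial: $n=b$ gives $\intdist{n\alpha}=0$), with continued fraction $\alpha=[a_0;a_1,a_2,\dots]$ and convergents $p_k/q_k$; write $a^*(\gamma)\in\N\cup\{\infty\}$ for the supremum of the partial quotients of a real number $\gamma$. If $a^*(\alpha)=\infty$ the conjecture is immediate: picking $k$ with $a_{k+1}$ large and $n=q_k$, one has $|q_k|_p\le 1$ and $\intdist{q_k\alpha}<1/q_{k+1}\le 1/(a_{k+1}q_k)$, so $|n|\cdot|n|_p\cdot\intdist{n\alpha}\le 1/a_{k+1}\to 0$ along that subsequence. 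Hence the whole content of the conjecture is the badly approximable case $a^*(\alpha)<\infty$.

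The plan is to reformulate the conjecture as a statement about the dilates $p^j\alpha$. The key observation is that $|n|\cdot|n|_p$ equals the largest divisor $m$ of $n$ coprime to $p$; writing $n=p^j m$ with $p\nmid m$ gives $|n|\cdot|n|_p\cdot\intdist{n\alpha}=|m|\cdot\intdist{m\beta_j}$, where $\beta_j:=p^j\alpha$ and $m\beta_j=n\alpha$. Minimising over $n$ and invoking the standard estimate $\inf_{m\ge 1}|m|\cdot\intdist{m\gamma}\asymp 1/a^*(\gamma)$ (attained, up to an absolute constant, at a convergent denominator of $\gamma$), one obtains the clean equivalence: $p$-$LC$ holds for $\alpha$ if and only if $\sup_{j\ge 0}a^*(p^j\alpha)=\infty$, that is, the dilates $p^j\alpha$ cannot all have partial quotients bounded by a single constant. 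A genuine proof would then have to show that repeatedly multiplying a badly approximable number by $p$ is forced, eventually, to manufacture arbitrarily large partial quotients, which one would attack through the classical ``folding/perturbation'' formulas expressing the continued fraction of an integer multiple of a number in terms of that of the number itself.

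This last step is the main obstacle, and it is unsolved: nothing currently rules out a badly approximable $\alpha$ all of whose dilates $p^j\alpha$ have partial quotients bounded by one fixed constant, and such an $\alpha$ would be a counterexample — which is exactly why $p$-$LC$ remains a conjecture. The one robust positive case is that of a quadratic irrational $\alpha$, settled by de Mathan and Teulié: each $p^j\alpha$ is again a quadratic irrational, so its continued fraction is eventually periodic with data governed by a fixed linear recurrence, and a pigeonhole argument applied to the associated companion matrix modulo $p^j$ forces a partial quotient growing with $j$. Beyond such structured families the conjecture is open — and it is precisely the \emph{opposite} phenomenon over function fields, where $p^j$ is replaced by $p(t)^j$ and the Laurent-series continued fractions can be engineered so that every partial quotient of every dilate stays of bounded degree, that produces the counterexamples constructed in this paper.
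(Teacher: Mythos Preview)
You have correctly identified that this statement is a \emph{conjecture}, not a theorem: the paper offers no proof of it whatsoever, merely stating it as motivation before passing to the function-field analogue $p(t)$-$LC$. Your exposition of the standard reduction to badly approximable $\alpha$, the reformulation via $\sup_j a^*(p^j\alpha)$, and the quadratic-irrational case due to de Mathan and Teuli\'e is accurate and in fact goes well beyond anything the paper says about $p$-$LC$ over $\R$; there is nothing to compare against, and nothing to correct.
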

\noindent Similarly to $LC$, Einsiedler and Kleinbock \cite{2007} proved in 2007 that the set of counterexamples to $p$-$LC$ has Hausdorff dimension equal to zero. For a deeper exploration of $LC$ and $p$-$LC$, see \cite{complexity,around_Littlewood,mult_dio} and  \cite{Intro_Littlewood}. Both of the aforementioned conjectures admit natural counterparts over function fields sitting over a ground field $\K$, with the focus of this paper being on the analogue of $p$-$LC$. Since the counterparts of $LC$ and $p$-$LC$ are known to be false when $\K$ is infinite (see \cite{t-adic_tm,CE} and \cite{padic}), it is assumed throughout this paper that the ground field $\K$ is finite.\\

\noindent Let then $\mathbb{K}=\F_q$ be the finite field with cardinality $q$. Denote the ring of polynomials with coefficients in $\F_q$ as $\F_q[t]$ and let $\F_q(t)$ be the field of rational functions. The \textbf{norm}\footnote{Although the norm over $\R$ and $\F_q((t^{-1}))$ share the same notation, it will be clear from context which is being used.} of $\Theta(t)\in\F_q(t)$ is defined as \begin{equation}\label{norm}
    |\Theta(t)|=q^{\deg(\Theta(t))}.
\end{equation}
This metric is used to define the completion of $\F_q(t)$, namely the field of formal Laurent series \begin{equation*}
    \F_q((t^{-1}))=\left\{\sum_{i=-h}^\infty a_i t^{-i}: h\in\Z\text{ and }a_i\in\F_q\right\}.
\end{equation*} 
\noindent In the function field set up, the analogue of a prime number is an irreducible polynomial $p(t)\in\F_q[t]$. Similarly to (\ref{p-adic}), define the $p(t)$\textbf{-adic norm} of a polynomial $N(t)\in\F_q[t]$ as \begin{equation*}
    |N(t)|_{p(t)}=|p(t)|^{-\nu_{p(t)}(N(t))}, ~~\text{  where  }~~\nu_{p(t)}(N(t))=\max_{i\in\N}\{i: p(t)^i|N(t)\}.
\end{equation*}
The analogue of $p$-$LC$, called the $p(t)$\textbf{-adic Littlewood Conjecture} ($p(t)$-$LC$), is then: \begin{conjecture}[$p(t)$-$LC$]\label{p(t)-LC}
    Let $\F_q$ be a finite field. For every $\Theta(t)$ in $\F_q((t^{-1}))$ and every irreducible $p(t)$ in $\F_q[t]$, \begin{equation*}
        \inf_{\N(t)\in\F_q[t]\backslash\{0\}} |N(t)|\cdot |N(t)|_{p(t)}\cdot \intdist{N(t)\cdot \Theta(t)}=0.
    \end{equation*}
\end{conjecture}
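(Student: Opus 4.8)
The plan is to \emph{disprove} Conjecture~\ref{p(t)-LC} over fields of characteristic $5$ by producing an explicit pair $(\Theta(t),p(t))$ — one may in fact take $p(t)=t$, so that it is $t$-$LC$ that fails — for which the infimum is bounded away from $0$, and then arguing that this transfers from $\F_5$ to every field of characteristic $5$. The first step is the standard reduction to continued fractions (de Mathan--Teulié; Adiceam--Nesharim--Lunnon): for a fixed $\Theta$ and irreducible $p(t)$, the infimum $\inf_{N}|N|\cdot|N|_{p(t)}\cdot\intdist{N\Theta}$ is positive if and only if there is a constant $C$ with
\[ \deg a \le C \quad\text{for every partial quotient } a \text{ of the continued fraction of } p(t)^k\Theta(t), \text{ for all } k\ge 0. \]
Writing $\lambda(g)$ for the largest degree of a partial quotient of a Laurent series $g$, the goal becomes to build $\Theta\in\F_5((t^{-1}))$ with $\sup_{k\ge0}\lambda(t^k\Theta)<\infty$. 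I would also record here the descent to the prime field: the continued fraction expansion of $t^k\Theta$ is produced by the Euclidean algorithm over $\F_5$ and hence has $\F_5(t)$-coefficients, and best approximations over any extension $\F_{5^m}((t^{-1}))$ to an element of $\F_5((t^{-1}))$ are still the convergents of its $\F_5$-continued fraction; so $\lambda$ — and with it the whole criterion — is insensitive to extending the ground field, and it suffices to work over $\F_5$.

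The second step is to choose $\Theta$ as an \emph{automatic} Laurent series: its coefficient sequence $(c_n)_{n\ge0}$ should be generated by a finite $5$-automaton (equivalently, by Christol's theorem, $\Theta$ is algebraic over $\F_5(t)$), with the series itself being the member corresponding to $p=5$ of the conjectured infinite family. The link to Step~1 is the \emph{number wall} of $(c_n)$ — the doubly-indexed array of Hankel-type determinants — via the classical dictionary between continued fractions of $\sum c_n t^{-n}$ and Hankel determinants over a field: the degrees of the partial quotients of $t^k\Theta$ are exactly the lengths of maximal horizontal runs of zeros in the number wall (appropriately shifted by $k$). Thus $\sup_k\lambda(t^k\Theta)<\infty$ is equivalent to a uniform bound $L$ on the lengths of horizontal zero-runs occurring anywhere in the number wall of $(c_n)$.

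The third step is the heart of the matter: showing that the number wall of $(c_n)$ is itself self-similar. The number-wall entries satisfy bilinear, purely local recurrences (the Dodgson-condensation / ``Pagoda'' relations, which take a characteristic-$5$-specific form); lifting the $5$-automaton for $(c_n)$ through these recurrences should exhibit the number wall as the fixed point of a substitution on a finite alphabet of $m\times m$ tiles. Once that is established, the set of finite patterns appearing in the wall is a finite union of ``images of states'', so the maximal zero-run length $L$ can be read off from a finite amount of data. This is exactly where the new Python algorithm enters: it constructs the automaton for $(c_n)$, computes the induced tile-substitution, closes it under the substitution to a finite set of tiles, and certifies the bound $L$. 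Combining the three steps yields $|N|\cdot|N|_{t}\cdot\intdist{N\Theta}\ge 5^{-L}$ for all nonzero $N\in\F_5[t]$, contradicting Conjecture~\ref{p(t)-LC} over $\F_5$ and hence over every field of characteristic $5$.

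I expect Step~3 to be the main obstacle, in two respects. First, one must prove that the number wall \emph{genuinely} closes up under a finite substitution in characteristic $5$ — not merely that it appears to do so over the range a computer can inspect — which requires global control of the characteristic-$5$ Dodgson/Pagoda identities and care that the finite search truly certifies the infinitary statement (no long zero-run anywhere, for every $k$). Second, there is the preliminary difficulty of identifying the correct $\Theta$ at all: since the family is only conjectural for general odd $p$, the case $p=5$ demands producing the explicit algebraic equation (or automaton) for the series and verifying that this particular choice works, guided by — but not mechanically reducible to — the $p=3$ construction of Adiceam, Nesharim and Lunnon.
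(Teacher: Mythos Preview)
Your overall strategy --- produce an explicit automatic Laurent series over $\F_5$, show that its number wall is itself generated by a finite tile-substitution, and read off a uniform bound on zero-windows --- is exactly the paper's approach, and your continued-fraction reformulation of the criterion is equivalent to the paper's Lemma~\ref{growcor} (bounded partial-quotient degrees of $t^k\Theta$ for all $k$ $\Leftrightarrow$ bounded windows in the number wall, via the Square Window Theorem~\ref{window}). The transfer to all fields of characteristic $5$ and to arbitrary irreducible $p(t)$ also follows the paper (the subfield remark after Theorem~\ref{F5} and Theorem~\ref{my_result}).

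There is, however, one genuine misconception in Step~2 that would derail you if taken literally. You assume the coefficient sequence is $5$-automatic and invoke Christol to make $\Theta$ algebraic over $\F_5(t)$; the paper does neither. The counterexample is the \emph{Second Level Paperfolding Sequence} (Definition~\ref{npf} with $n=2$, Example~\ref{PF2}), which is $2$-automatic, and the number wall over $\F_5$ is shown to be the fixed point of a two-dimensional $[2,2]$-morphism --- the base of the substitution has nothing to do with the characteristic of the field. There is no claim (and no reason to expect) that $\Xi(t)$ is algebraic over $\F_5(t)$. Relatedly, the Frame Constraints (Theorem~\ref{FC}) are field-independent identities, not ``characteristic-$5$-specific'' Dodgson relations; the characteristic enters only through the arithmetic of the entries. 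If you replace ``$5$-automaton'' by ``$2$-automaton'' and drop the Christol remark, your plan coincides with the paper's; as written, your Step~3 would have you searching for a $5$-substitution structure that is not the one actually present.
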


\noindent When $p(t)=t$, $p(t)$-$LC$ is further abbreviated to $t$-$LC$.\\

\noindent Not all results about $p$-$LC$ translate into the positive characteristic set up. In 2020, Einsiedler, Lindenstrauss
and Mohammadi \cite{PC} provided the function field analogues of the measure classification results of both Lindenstrauss, Einsiedler and Katok \cite{EKL} and Einsiedler and Kleinbock \cite{2007}. However, it is not clear if these results are sufficient for proving anything about the Hausdorff dimension of the set of counterexamples to $p(t)$-$LC$. \\

\noindent This paper focusses on $p(t)$-LC over finite fields of odd characteristic. Nothing conclusive is known when the ground field has characteristic 2, but Section \ref{sect:end} provides some evidence that $t$-LC may be true over $\F_2$.\\

\noindent The most significant progress towards testing the validity of Conjecture \ref{p(t)-LC} is due to a breakthrough paper by Adiceam, Nesharim and Lunnon \cite{Faustin} that provided an explicit counterexample to $t$-$LC$ over fields of characteristic 3. The sequence of coefficients of this Laurent series, denoted $\Upsilon(t)$, is the Paperfolding sequence. Whilst the Paperfolding sequence has been studied extensively, the larger family of $n^\nth$-Level Paperfolding sequences introduced hereafter (see Definition \ref{npf}) is original to this paper. The classic Paperfolding Sequence corresponding to the $n=1$ case of this definition.
\begin{definition}\label{npf} Let $n$ be a natural number, $\nu_p(n)$ be as in (\ref{p-adic}) and let $\overline{~\cdot~}:\N\to\N$ be the value of the input modulo $2^{n+1}$. Define the $n^\nth$-\textbf{Level Paperfolding Sequence} $(\Pi_i^{(n)})_{i\ge1}$ as\begin{equation}\label{PFseq}
    \Pi^{(n)}_{i}= \frac{\overline{2^{-\nu_2(i)}i}-1}{2}\cdotp
\end{equation}\end{definition} 

\noindent Computer evidence provided by Adiceam, Nesharim and Lunnon \cite{Faustin} provides strong support for the following conjecture:\begin{conjecture}[Adiceam, Nesharim, Lunnon]\label{p=3}
    Let $p\equiv 3\mod 4$ be a prime. Then, $\Upsilon(t)$ is a counterexample to $t$-$LC$ over fields of characteristic $p$.
\end{conjecture}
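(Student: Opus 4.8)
The plan is to reduce Conjecture~\ref{p=3} to a purely combinatorial statement about the \emph{number wall} of the Paperfolding sequence (the doubly-infinite array of Toeplitz determinants built from shifts of the coefficient sequence), reduced modulo $p$, and then to exploit automaticity. The starting point is the dictionary established by Adiceam, Nesharim and Lunnon \cite{Faustin}: writing $N(t)=t^m R(t)$ with $t\nmid R(t)$ one has $|N(t)|\cdot|N(t)|_{t}=|R(t)|$ and $\intdist{N(t)\Theta(t)}=\intdist{R(t)\cdot t^m\Theta(t)}$, so that $t$-$LC$ fails for $\Theta(t)$ exactly when the partial quotients occurring in the continued fraction expansions of $t^m\Theta(t)$, over all $m\ge0$, have uniformly bounded degree. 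By the classical correspondence between high-degree partial quotients and square blocks of vanishing Toeplitz determinants, this is equivalent to the number wall of the coefficient sequence $(s_i)_{i\ge1}$ of $\Theta(t)$ over $\F_p$ containing no all-zero square window of side exceeding some absolute constant. Hence it suffices to prove: for every prime $p\equiv3\bmod4$, the number wall of $\Upsilon(t)$ over $\F_p$ has uniformly bounded zero-windows.

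The second step is to describe this number wall explicitly using the fact that the Paperfolding sequence is $2$-automatic. Over $\F_p$ the wall enjoys a Frobenius self-similarity: reduction modulo $p$ of the defining determinants links the rows indexed by multiples of $p^{k}$ to rows at smaller scale, and feeding in the $2$-automatic structure of $(s_i)$ should show that the whole array is generated by a finite-state rule reading the base-$2$ expansion of one coordinate and the base-$p$ expansion of the other, refining to arbitrary $p$ the explicit ``Pagoda''-type description of the $\F_3$ wall in \cite{Faustin}. The hypothesis $p\equiv3\bmod4$ is expected to be precisely what makes this finite-state rule benign: when a zero-window tries to enlarge, the $2\times2$ minors bordering it satisfy a closing identity of the shape $x^2+y^2=0$, which over $\F_p$ forces $x=y=0$ iff $-1$ is a quadratic non-residue, that is, iff $p\equiv3\bmod4$. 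In the complementary case $p\equiv1\bmod4$ this identity has nontrivial solutions, the windows are free to grow along the wall, and $\Upsilon(t)$ is instead expected to \emph{satisfy} $t$-$LC$, consistent with the dichotomy recorded in the introduction.

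Granting the finite-state description, the uniform bound on zero-windows follows by induction on scale. One verifies on a large finite initial region of the wall --- using the algorithm merging automatic sequences with Diophantine approximation over $\F_p(t)$ announced in the abstract --- that no zero-window of side larger than the relevant constant occurs, and then the finite-state rule transports this bound to all of $\N^2$: a hypothetical oversized window at scale $p^{k+1}$ would, under the Frobenius reduction, project to one at scale $p^{k}$, and iterating lands inside the verified region, a contradiction. Combined with the first step, this shows $\Upsilon(t)$ is a counterexample to $t$-$LC$ over $\F_p$ for every $p\equiv3\bmod4$.

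The principal obstacle is the uniformity in $p$ of the second step. For $p=3$ the finite-state structure of the Paperfolding wall can be pinned down by a hand computation exploiting the minuscule size of the field; for general $p$ the determinants take values in larger extensions of $\F_p$, the number of border configurations to analyse grows, and one needs a genuinely conceptual input --- most naturally a theorem that the number wall of any $k$-automatic sequence over $\F_p$ is itself $(k,p)$-automatic --- paired with a transparent account of how the residue of $p$ modulo $4$ controls the vanishing of the bordering minors. Absent such a uniform statement the argument collapses to a separate finite verification for each prime, which is exactly the route by which $p=3,7,11$ are settled here and why the general statement remains Conjecture~\ref{p=3}.
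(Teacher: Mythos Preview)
The statement you are addressing is labelled a \emph{Conjecture} in the paper, and the paper does not claim to prove it; it only establishes the cases $p=3,7,11$ by separate computer-assisted verifications (Theorem~\ref{F7}). Your proposal is therefore not to be compared against a proof in the paper --- there is none --- and you yourself concede in the final paragraph that the argument ``collapses to a separate finite verification for each prime'', which is precisely the situation. So what you have written is a heuristic roadmap, not a proof, and it should be presented as such.

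Beyond this, two of the technical ingredients in your sketch are not aligned with what is actually known or conjectured. First, you propose that the number wall should be governed by a finite-state rule reading base-$2$ in one coordinate and base-$p$ in the other (a ``$(k,p)$-automatic'' structure driven by Frobenius). The paper's evidence points in a different direction: Conjecture~\ref{autonw} asserts that the number wall of a $k$-automatic sequence over $\F_q$ is two-dimensional $k$-automatic, with the \emph{same} $k$ in both coordinates, and the verified cases are all built from $[2,2]$-morphisms with no base-$p$ component. Your induction ``project an oversized window at scale $p^{k+1}$ down to scale $p^{k}$'' therefore has no established mechanism behind it. Second, the proposed explanation for the dichotomy --- that bordering minors satisfy an identity $x^{2}+y^{2}=0$ which forces $x=y=0$ exactly when $-1$ is a non-residue --- is an attractive heuristic but is not derived from the Frame Constraints (Theorem~\ref{FC}) and does not appear in the paper; absent a derivation it carries no weight. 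These two points are the genuine gaps: without a correct structural description of the wall uniform in $p$, and without a rigorous link between the residue of $p$ modulo $4$ and window growth, the plan does not become a proof.
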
  \noindent The same computations nevertheless show that it is extremely unlikely that $\Upsilon(t)$ should provide a counterexample to $t$-$LC$ in the complementary case when the ground field has characteristic $p\equiv 1 \mod 4$. The following theorem fills this gap. 
\begin{theorem}\label{F5}
    Let $\Xi(t)\in\F_5((t^{-1}))$ be a Laurent series whose coefficients are given by the second-level Paperfolding sequence $(\Pi^{(2)}_n)_{n\ge0}$ as in Definition \ref{npf}. Then, $\Xi(t)$ is a counterexample to $t$-$LC$ over fields of characteristic 5.
\end{theorem}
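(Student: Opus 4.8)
The plan is to adapt the strategy of Adiceam, Nesharim and Lunnon \cite{Faustin} to characteristic $5$ and to the second-level Paperfolding sequence, reducing Theorem \ref{F5} to a structural statement about an associated determinantal array which is then settled by a finite, computer-assisted verification. The first step is the standard reformulation of $t$-$LC$ over function fields. Writing an arbitrary $N(t)\in\F_5[t]\setminus\{0\}$ as $N(t)=t^{j}M(t)$ with $t\nmid M(t)$, one has $|N(t)|\cdot|N(t)|_{t}=|M(t)|$ and $\intdist{N(t)\Xi(t)}=\intdist{M(t)\,\Xi^{(j)}(t)}$, where $\Xi^{(j)}(t)$ denotes the negative-degree part of $t^{j}\Xi(t)$. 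By the best-approximation property of the convergents of the continued fraction expansion in $\F_5((t^{-1}))$, the inner infimum $\inf_{M\ne0}|M(t)|\cdot\intdist{M(t)\Xi^{(j)}(t)}$ equals $5^{-K_{j}}$, where $K_{j}$ is the largest degree of a partial quotient of $\Xi^{(j)}$. Hence $\Xi$ is a counterexample to $t$-$LC$ exactly when $\sup_{j\ge0}K_{j}<\infty$, that is, when the partial quotients of all forward $t$-shifts of $\Xi$ have uniformly bounded degree; and since $\Xi$ has coefficients in the prime field, its continued fraction expansion is unchanged over any finite extension, so this also gives the conclusion over every field of characteristic $5$.

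The second step is to encode this condition in the \emph{number wall} of the coefficient sequence $\mathbf{a}=(\Pi^{(2)}_i)_{i\ge1}$ over $\F_5$, i.e.\ the two-dimensional array whose rows record the Hankel determinants of the successive $t$-shifts of $\Xi$. By the square-window structure of number walls and the correspondence between maximal zero square-blocks (``windows'') and runs of vanishing Hankel determinants, $\sup_{j\ge0}K_{j}$ is finite precisely when the windows occurring in the number wall of $\mathbf{a}$ have uniformly bounded side length. It thus suffices to show that this number wall contains only windows of side at most some explicit $w_{0}$, which then gives $\inf_{N(t)\ne0}|N(t)|\cdot|N(t)|_{t}\cdot\intdist{N(t)\Xi(t)}\ge 5^{-(w_{0}+1)}>0$.

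To control the wall I would exploit the $2$-automaticity of $(\Pi^{(2)}_i)$, which is immediate from Definition \ref{npf}: the value $\Pi^{(2)}_i$ is computed by a finite automaton reading the binary digits of $i$, and one has the self-similarity relations $\Pi^{(2)}_{2i}=\Pi^{(2)}_i$ and $\Pi^{(2)}_{2i+1}=i\bmod 4$, equivalently that $\Xi(t)-\Xi(t^{2})$ is a rational function of $t$. Substituting these relations into the recursion defining the number wall produces a self-similar structure in which every window arises from one of finitely many local configurations that propagate under the doubling map, so the uniform bound on window sizes for the whole infinite wall reduces to: (i) enumerating a finite ``kernel'' of configurations; (ii) verifying that the propagation rules never enlarge a window beyond $w_{0}$; and (iii) inspecting a bounded initial segment of the wall directly. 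Steps (i)--(iii) are precisely what the new algorithm of the paper carries out, combining the combinatorics of automatic sequences with Diophantine approximation over $\F_5((t^{-1}))$.

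I expect the main obstacle to be establishing (i) and (ii): that the family of local configurations genuinely \emph{closes up} into a finite set over $\F_5$ and that no admissible transition ever produces windows of unbounded size. This is considerably more delicate than the characteristic-$3$ treatment in \cite{Faustin}, because the rank conditions on Hankel matrices over $\F_5$ split into many more cases, so the enumeration must be organised efficiently rather than by naive search; this is exactly the role of the efficient algorithm, and confirming the explicit value of $w_{0}$ is the computer-assisted heart of the proof. Once this is established, Theorem \ref{F5} follows by assembling the reductions of the first two paragraphs.
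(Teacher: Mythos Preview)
Your proposal is correct and follows essentially the same route as the paper. Both arguments reduce $t$-$LC$ to the statement that the number wall of $(\Pi^{(2)}_i)_{i\ge1}$ over $\F_5$ has windows of uniformly bounded side length, and both settle this by exhibiting the wall as a two-dimensional $2$-automatic object and then carrying out a finite computer verification that the substitution structure closes up and respects the Frame Constraints.

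The only notable difference is in the first reduction: you pass through continued fractions and the best-approximation property of convergents to reach the bounded-window criterion, whereas the paper invokes Lemma~\ref{growcor} (from \cite{Faustin}) directly, which phrases the same equivalence in terms of Toeplitz determinants without the continued-fraction detour. Your sketch of the verification (``finite kernel of configurations'' closed under ``propagation rules'') is exactly what the paper implements concretely as a $[2,2]$-morphism $\Psi$ on a finite alphabet of tiles together with a number wall coding $\tau$, with closure checked by Algorithms~2.1 and~2.2; your anticipated obstacle---that the configuration set must genuinely be finite over $\F_5$---is precisely the computational burden the paper reports (roughly $6.5\times10^{7}$ tiles, versus $390$ for the $\F_3$ case).
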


\noindent Since $\F_p$ is a subfield of $\F_{p^n}$ for any natural number $n$, it is a simple corollary of Theorem \ref{F5} that $t$-$LC$ fails over any finite field of characteristic 5 (see \cite[Section 4]{Faustin} for details). \\

\noindent Not only do the First and Second Level Paperfolding Sequences provide counterexamples to $t$-$LC$, but they also induce counterexamples to $p(t)$-$LC$ for any irreducible polynomial $p(t)$. This is due to the following result by the second named author \cite{My_paper}.

\begin{theorem}\label{my_result}
    Let $p(t)\in\mathbb{K}[t]$ be an irreducible polynomial of degree $m$ and let $l$ be a positive integer. Any counterexample to $t$-$LC$ induces a counterexample to $p(t)$-$LC$ in the following sense: if $\Theta(t)=\sum_{i=1}^\infty a_it^{-i}\in\mathbb{K}((t^{-1}))$ satisfies \[\inf_{\N(t)\in\F_q[t]\backslash\{0\}} |N(t)|\cdot |N(t)|_{t}\cdot \intdist{N(t)\cdot \Theta(t)}=q^{-l},\]  then $\Theta(p(t))=\sum_{i=1}^\infty a_i p(t)^{-i}\in\mathbb{K}((t^{-1}))$ satisfies \[\inf_{\N(t)\in\F_q[t]\backslash\{0\}} |N(t)|\cdot |N(t)|_{p(t)}\cdot \intdist{N(t)\cdot \Theta(t)}=q^{-lm}.\]
\end{theorem}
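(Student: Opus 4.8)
The plan is to transport the problem through the substitution $t\mapsto p(t)$, which embeds $\F_q((t^{-1}))$ into itself as the subfield $\F_q((p(t)^{-1}))$ and multiplies every relevant valuation by $m=\deg p$. Precisely, let $\tau\colon\F_q((t^{-1}))\to\F_q((t^{-1}))$ be the $\F_q$-algebra homomorphism with $\tau(t)=p(t)$; it is well defined (since $\deg_t p(t)^{-i}\to-\infty$) and injective, and $\tau(\Theta(t))=\Theta(p(t))$. The one elementary fact driving the proof is that $\deg_t\tau(f)=m\deg_t f$ for every $f$. This forces $\tau$ to send the ``polynomial part plus principal part'' splitting of a Laurent series to that of its image, whence $|\tau(f)|=|f|^{m}$ and $\intdist{\tau(f)}=\intdist{f}^{m}$; moreover $\nu_{p(t)}\bigl(g(p(t))\bigr)=\nu_t(g)$ for $g\in\F_q[t]$, so $|\tau(g)|_{p(t)}=|g|_t^{m}$. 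I would verify these normalisation identities first.

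The inequality ``$\le$'' then follows at once: for every $C(t)\in\F_q[t]\setminus\{0\}$, applying the three identities with $f=\Theta(t)$ and $g=C(t)$ yields
\[
|\tau(C)|\cdot|\tau(C)|_{p(t)}\cdot\intdist{\tau(C)\,\Theta(p(t))}=\bigl(|C|\cdot|C|_t\cdot\intdist{C\,\Theta(t)}\bigr)^{m},
\]
and taking the infimum over $C$ (using that $x\mapsto x^{m}$ is increasing) shows the $t$-$LC$ infimum attached to $\Theta(p(t))$ with $p(t)$ replaced by $t$, i.e. the $p(t)$-$LC$ infimum for $\Theta(p(t))$, is at most $q^{-lm}$.

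For ``$\ge$'', I would expand an arbitrary $N(t)\in\F_q[t]\setminus\{0\}$ in base $p(t)$, regrouped by powers of $t$: iterated division by $p(t)$ and collection of the coefficient of each $t^{r}$ with $0\le r<m$ give $N(t)=\sum_{r=0}^{m-1}t^{r}\,\tau(N_r)$ with $N_r\in\F_q[t]$ not all zero. The nonzero summands $t^{r}\tau(N_r)$ lie in pairwise distinct residue classes of $\deg_t$ modulo $m$, so the ultrametric inequality detects no cancellation among them (all maxima below are over the $r$ with $N_r\ne0$). Reading off degrees gives $|N|=\max_{r}q^{r}|N_r|^{m}$; writing $N_r(t)\Theta(t)=P_r(t)+\varphi_r(t)$ with $P_r$ polynomial and $\varphi_r$ the principal part (which is nonzero, since $\Theta(t)$ is necessarily irrational), the same no-cancellation principle gives $\intdist{N(t)\,\Theta(p(t))}=\max_{r}q^{r}\intdist{N_r(t)\,\Theta(t)}^{m}$. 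Finally $\nu_{p(t)}(N)=\mu:=\min_{r}\nu_t(N_r)$: the bound $\ge$ is immediate, and $\le$ holds because $p(t)^{\mu+1}\mid N$ would force $p(t)$ to divide a nonzero polynomial of degree $<m$, which is impossible; hence $|N|_{p(t)}=\max_{r}|N_r|_t^{m}$.

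Putting the three formulas together and bounding each maximum below by its value at one fixed index $r$ with $N_r\ne0$, the hypothesis $|N_r|\cdot|N_r|_t\cdot\intdist{N_r\,\Theta(t)}\ge q^{-l}$ yields
\[
|N|\cdot|N|_{p(t)}\cdot\intdist{N\,\Theta(p(t))}\ \ge\ q^{2r}\bigl(|N_r|\cdot|N_r|_t\cdot\intdist{N_r\,\Theta(t)}\bigr)^{m}\ \ge\ q^{2r-lm}\ \ge\ q^{-lm},
\]
which combined with ``$\le$'' pins the infimum at exactly $q^{-lm}$. The step I expect to be the real obstacle is this ``$\ge$'' direction: proving the $p(t)$-adic valuation formula $\nu_{p(t)}(N)=\min_{r}\nu_t(N_r)$ and carefully tracking which index realises each of the three maxima, together with the easy but necessary observation that a counterexample to $t$-$LC$ is irrational so that no principal part $\varphi_r$ vanishes. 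The normalisation identities and the ``$\le$'' direction are routine.
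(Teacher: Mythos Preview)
The paper does not contain a proof of this theorem: it is quoted from the second named author's companion paper \cite{My_paper} and used as a black box. There is therefore no in-paper argument to compare your proposal against.

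That said, your argument is sound. The substitution homomorphism $\tau\colon f(t)\mapsto f(p(t))$ does satisfy the three scaling identities you list, and the base-$p(t)$ decomposition $N(t)=\sum_{r=0}^{m-1}t^{r}N_r(p(t))$ exists and is unique because $1,t,\dots,t^{m-1}$ is an $\F_q[p(t)]$-basis of $\F_q[t]$. Your computation of $\nu_{p(t)}(N)$ is correct, the residue-class argument modulo $m$ cleanly prevents cancellation in all three maxima, and the observation that a counterexample to $t$-$LC$ is irrational (so each $\varphi_r\ne 0$ whenever $N_r\ne 0$) is exactly what is needed to make the formula for $\intdist{N\,\Theta(p(t))}$ go through. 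The final bound $q^{2r}\cdot q^{-lm}\ge q^{-lm}$ is immediate since $r\ge 0$. Both directions are complete.
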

\noindent Applying Theorem \ref{my_result} to $\Xi(t)$ immediately results in the following corollary:
\begin{cor}\label{F5_cor}
    Let $q$ be a positive power of $5$, let $p(t)\in\F_q[t]$ be an irreducible polynomial and let $(\Pi^{(2)}_i)_{i\ge1}$ be as in Definition \ref{npf}. Then, the Laurent series \[\Xi(p(t)):=\sum_{i=1}^\infty \Pi^{(2)}_{i} p(t)^{-i}\] disproves $p(t)$-$LC$ over $\F_q$. 
\end{cor}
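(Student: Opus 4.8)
The proof of Corollary~\ref{F5_cor} is immediate from the two results quoted just above it, so the plan is simply to chain them together carefully, keeping track of the exact norms involved. First I would invoke Theorem~\ref{F5}: the Laurent series $\Xi(t)=\sum_{i\ge 1}\Pi^{(2)}_i t^{-i}\in\F_5((t^{-1}))$ is a counterexample to $t$-$LC$, which by definition means
\[
\inf_{N(t)\in\F_5[t]\setminus\{0\}} |N(t)|\cdot |N(t)|_t\cdot \intdist{N(t)\cdot\Xi(t)} > 0.
\]
Since this quantity is a positive real number and every term $|N(t)|\cdot|N(t)|_t\cdot\intdist{N(t)\Xi(t)}$ lies in the discrete set $\{q^{-l}:l\in\Z\}\cup\{0\}$ (all three factors being integer powers of $q$ when nonzero), the infimum is actually attained and equals $q^{-l}$ for some integer $l$. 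Here $q=5$ at this stage; the passage to a general power of $5$ will come at the very end via the subfield remark already cited in the paper.

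Next I would apply Theorem~\ref{my_result} with $\mathbb{K}=\F_q$ (where $q$ is now the given power of $5$), with $\Theta(t)=\Xi(t)$ and $a_i=\Pi^{(2)}_i$, and with $p(t)$ the given irreducible polynomial of degree $m$. The hypothesis of Theorem~\ref{my_result} is exactly the statement that $\Xi(t)$ realizes infimum $q^{-l}$ against $t$-$LC$, which we have just established (noting that the value $q^{-l}$ is unchanged, or at worst only improved, when the ground field is enlarged from $\F_5$ to $\F_q$ — this is the content of \cite[Section~4]{Faustin} and is the one slightly non-formal point, handled below). The conclusion of Theorem~\ref{my_result} then reads
\[
\inf_{N(t)\in\F_q[t]\setminus\{0\}} |N(t)|\cdot |N(t)|_{p(t)}\cdot \intdist{N(t)\cdot \Xi(p(t))} = q^{-lm},
\]
where $\Xi(p(t))=\sum_{i\ge 1}\Pi^{(2)}_i\, p(t)^{-i}$. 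Since $l\ge 1$ and $m\ge 1$, the right-hand side $q^{-lm}$ is strictly positive, so $\Xi(p(t))$ violates $p(t)$-$LC$ over $\F_q$, which is precisely the assertion of the corollary.

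The only step requiring a word of care — and the ``main obstacle'', such as it is — is the transition from characteristic-$5$ base field $\F_5$ to an arbitrary positive power $q=5^k$, i.e. verifying that $\Xi(t)$ remains a $t$-$LC$ counterexample over $\F_q$ with the same (or a smaller) attained value $q^{-l}$. This is not entirely formal because enlarging the coefficient field enlarges the set of competing polynomials $N(t)$, so a priori the infimum could drop to $0$. However, this is exactly the situation addressed in \cite[Section~4]{Faustin}: one shows that an optimal $N(t)\in\F_q[t]$ can be reduced, coefficient-component by coefficient-component over an $\F_5$-basis of $\F_q$, to an optimal $N(t)\in\F_5[t]$ without increasing any of the three factors, because the norm, the $t$-adic norm, and the nearest-integer distance are all controlled by leading-term degrees that only decrease under such a projection. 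I would therefore either cite that argument verbatim or reproduce it in a sentence, and then the corollary follows. Everything else is a direct substitution into the two black-box theorems already available in the excerpt.
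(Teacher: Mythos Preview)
Your proposal is correct and follows exactly the route the paper takes: invoke Theorem~\ref{F5} to get that $\Xi(t)$ is a $t$-$LC$ counterexample, extend to $\F_q$ for $q$ a power of $5$ via the subfield argument of \cite[Section~4]{Faustin}, and then apply Theorem~\ref{my_result} to pass from $t$ to an arbitrary irreducible $p(t)$. The extra care you take in noting that the infimum is attained and in spelling out the subfield reduction is more detail than the paper gives, but the argument is the same.
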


\noindent  The $n^\nth$-Level Paperfolding Sequence is significant not only due to Corollary \ref{F5_cor}, but also because there is strong computational evidence suggesting that, for any field of odd characteristic, there exists some $n\in\N$ such that the Laurent series $\sum_{i=1}^\infty \Pi^{(n)}_i t^{-i}$ provides a counterexample to $t$-LC (and hence $p(t)$-LC by Theorem \ref{my_result}). That is, the $n^\nth$-Level Paperfolding Sequence provides a complete picture of how $p(t)$-$LC$ fails over any finite field of odd characteristic. The method used to find the value of $n$ given the characteristic of the field is explained in the following conjecture, which summarises the expected behaviour of $t$-$LC$ in the cases discussed thus far.
\begin{conjecture}\label{main_conj}
    Let $p$ be an odd prime and let $n\in\N$ be the least natural number such that $p\not\equiv1\mod 2^{n+1}$. Additionally, let $q$ be any positive power of $p$. Then, the Laurent series $\Theta(t)=\sum_{i=1}^\infty \Pi^{(n)}_i t^{-i}\in\F_q((t^{-1}))$ is a counterexample to $t$-LC over $\F_q$. Specifically, $\Theta(t)$ satisfies \begin{equation*}
        \inf_{N(t)\in\F_q[t]\backslash\{0\}} |N(t)|\cdot |N(t)|_t\cdot  |\langle N(t)\cdot\Theta(t)\rangle|=q^{-2^n}.
    \end{equation*}
\end{conjecture}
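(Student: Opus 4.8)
\emph{Proof strategy.} The plan is to generalise the argument behind Theorem~\ref{F5} to arbitrary level $n$ and then to isolate the one ingredient that must be made uniform in the prime $p$; as stated, Conjecture~\ref{main_conj} is out of reach in full, but for each individual $p$ it reduces to the same kind of finite verification that proves Theorem~\ref{F5}. The starting point is the classical reformulation of $t$-$LC$ in terms of continued fractions: for $\Theta(t)\in\F_q((t^{-1}))$,
\[
\inf_{N(t)\in\F_q[t]\setminus\{0\}}|N(t)|\cdot|N(t)|_t\cdot\intdist{N(t)\Theta(t)}=q^{-l},
\]
where $l$ is the largest degree of a partial quotient occurring in the continued fraction expansion of $t^k\Theta(t)$ as $k$ ranges over $\N$ (writing $N=t^aM$ with $t\nmid M$ converts the left-hand side into $\inf_{a\ge0}\inf_{t\nmid M}|M|\cdot\intdist{M\,t^a\Theta}$, and the constraint $t\nmid M$ only restricts which convergent denominators are admissible, which is dealt with by a standard shifting argument). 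So the goal becomes: for $n$ minimal with $p\not\equiv1\bmod2^{n+1}$ and $\Theta(t)=\sum_{i\ge1}\Pi^{(n)}_it^{-i}$, every partial quotient of every shift $t^k\Theta(t)$ has degree at most $2^n$, with degree $2^n$ attained for some $k$ (attainment being the easy half; the bound is the substance).

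Next I would exploit automaticity. By Definition~\ref{npf}, $\Pi^{(n)}_i$ depends only on $\nu_2(i)$ and on $i\bmod2^{n+1}$, so $(\Pi^{(n)}_i)_{i\ge1}$ is produced by an explicit finite $2$-automaton of size linear in $n$, and hence $\Theta(t)$ is algebraic over $\F_p(t)$ (Christol's theorem). Following the method of the present paper (and, in the paperfolding case $n=1$, of Adiceam, Nesharim and Lunnon), this automaticity propagates to the two-dimensional array that simultaneously encodes the continued fractions of all the shifts $t^k\Theta(t)$ — equivalently to the associated Hankel-type array, or to the folding data in the classical case. One then builds a finite-state transducer that performs the folding step carrying the continued fraction of one shift to that of the next and records, along each transition, the degrees of the partial quotients it creates. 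For a fixed prime $p$, hence fixed $n$, this transducer is finite, and checking that the recorded degrees never exceed $2^n$ while $2^n$ does occur is a terminating computation — precisely the Python computation carried out for $p=5$ in the proof of Theorem~\ref{F5}.

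The decisive and hardest step is to describe this transducer uniformly in $n$. The partial quotients inserted at a folding step have coefficients given by explicit polynomial expressions in the entries of $\F_p$, and the governing phenomenon ought to be a dichotomy at each level $m$: one of these coefficients — morally a sum tied to the $2^{m+1}$-th roots of unity — is a nonzero element of $\F_p$ exactly when $2^{m+1}\nmid p-1$. When it vanishes the folding recursion fails to close at level $m$ and a partial quotient of unbounded degree is forced, whereas at the first level $n$ where it does not vanish the recursion becomes self-similar and caps every partial quotient at degree $2^n$. Proving this dichotomy in closed form — evaluating the relevant continuants modulo $p$ and showing the outcome is controlled solely by the $2$-adic valuation of $p-1$ — is where the real work lies, and I expect it to be the main obstacle: for any single prime the result is a finite check, so the content of the conjecture is exactly this uniformity, which also requires controlling the shifts $t^k\Theta(t)$ for all $k$ simultaneously while the transducer grows with $n$. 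A natural intermediate target, which I would attack first, is the case $p\equiv5\bmod8$ (so $n=2$), generalising Theorem~\ref{F5} from $p=5$ to all such primes; success there should reveal the shape of the general argument.
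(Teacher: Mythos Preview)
The statement is Conjecture~\ref{main_conj}, which the paper explicitly leaves open; only the instances $p=5$ (Theorem~\ref{F5}) and $p\in\{7,11\}$ (Theorem~\ref{F7}) are established here, with $p=3$ due to earlier work, and the general claim is offered purely as a conjecture backed by computation. There is thus no proof in the paper to compare against, and you yourself concede the full statement is ``out of reach''. Your continued-fraction reformulation --- bounding the partial-quotient degrees of all shifts $t^k\Theta$ by $2^n$ --- is equivalent to the paper's number-wall reformulation (Lemma~\ref{growcor}) bounding window sizes, and your proposed finite-state transducer on folding data would, for a fixed $p$, parallel the paper's explicit $[2,2]$-morphism on tiles. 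For individual primes the two programmes coincide in spirit; neither currently yields anything uniform in $p$.

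The substantive gap is exactly your ``dichotomy'' paragraph. The assertion that a specific coefficient --- ``morally a sum tied to the $2^{m+1}$-th roots of unity'' --- vanishes in $\F_p$ precisely when $2^{m+1}\mid p-1$, and that this single scalar governs whether the folding recursion closes at level $m$, is a heuristic with no supporting identity or computation behind it. The paper's own data argue against any simple uniform mechanism: for the same level $n=1$, the tile count jumps from $390$ over $\F_3$ to $1{,}778{,}011$ over $\F_7$ to over $7\times10^7$ over $\F_{11}$, so the finite-state structure depends on $p$ far more intricately than a one-parameter obstruction would predict. Until that dichotomy is made precise and proved, your proposal does not advance beyond what the paper already delivers --- a terminating check for each prime separately --- and so is not a proof of the conjecture.
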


\noindent Until now, the only evidence towards Conjecture \ref{main_conj} was provided by Adiceam, Nesharim and Lunnon in \cite{Faustin}. To remedy this, the method used to prove Theorem \ref{F5} is also applied to the Paperfolding Laurent series $\Upsilon(t)$ to disprove $t$-$LC$ in fields of characteristic 7 and 11. Combined with Theorem \ref{my_result}, this yields the following result:
\begin{theorem}\label{F7}
    Let $q$ be a positive power of $7$ or $11$, let $p(t)\in\F_q[t]$ be an irreducible polynomial and let $(\Pi^{(1)}_i)_{i\ge1}$ be as in Definition \ref{npf}. Then the Laurent series \[\Upsilon(p(t)):=\sum_{i=1}^\infty \Pi^{(1)}_{i}p(t)^{-i}\] is a counterexample to $p(t)$-$LC$ over $\F_q$. 
\end{theorem}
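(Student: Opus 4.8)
The plan is to reduce everything to the single statement that $\Upsilon(t) = \sum_{i\ge 1}\Pi^{(1)}_i t^{-i}$ is a counterexample to $t$-$LC$ over $\F_p$ for $p = 7$ and $p = 11$, since Theorem \ref{my_result} then immediately upgrades this to a counterexample to $p(t)$-$LC$ for every irreducible $p(t)\in\F_q[t]$ and every positive power $q$ of $7$ or $11$ (using that $\F_p\subseteq\F_q$, exactly as in the deduction of Corollary \ref{F5_cor} from Theorem \ref{F5}). So the real content is: establish
\[
    \inf_{N(t)\in\F_p[t]\setminus\{0\}} |N(t)|\cdot |N(t)|_t\cdot |\langle N(t)\Upsilon(t)\rangle| \;=\; p^{-2}
\]
for $p\in\{7,11\}$, matching the $n=1$ case of Conjecture \ref{main_conj}. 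Following \cite{Faustin}, I would reformulate this via the standard dictionary between the $t$-$LC$ quantity and the growth of partial quotients in continued-fraction expansions: writing any nonzero $N(t) = t^k M(t)$ with $t\nmid M(t)$, the product $|N(t)|\cdot|N(t)|_t$ collapses to $|M(t)|$, so the conjecture is equivalent to asserting that for every $k\ge 0$ the Laurent series $t^k\Upsilon(t)$ has all partial quotients of degree at most $2$ in its continued fraction expansion — equivalently, that the family $\{t^k\Upsilon(t) : k\ge 0\}$ has uniformly bounded partial quotients, with the precise bound $2$ (rather than $1$) being what pins down the value $p^{-2}$.

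The key steps, in order, are: (1) show that the Paperfolding sequence $(\Pi^{(1)}_i)$ is $2$-automatic, and exhibit $\Upsilon(t)$ as an algebraic element of $\F_p((t^{-1}))$ satisfying an explicit functional equation relating $\Upsilon(t)$, $\Upsilon(t^2)$ (or $\Upsilon(t)$ at Frobenius-twisted arguments) — this is the input that makes the problem finite-state; (2) feed this into the algorithm described in the abstract, which combines the automatic-sequence structure with the combinatorics of continued fractions over $\F_p((t^{-1}))$ to produce, for each residue class of $k$, a finite description (a "substitution" or transducer) of the continued fraction expansion of $t^k\Upsilon(t)$; (3) verify, by running this finite computation over $\F_7$ and over $\F_{11}$, that every partial quotient that ever appears has degree $\le 2$, and that degree $2$ is attained, yielding the exact value $p^{-2}$; (4) invoke Theorem \ref{my_result} to pass from $t$-$LC$ to $p(t)$-$LC$ and from $\F_p$ to $\F_q$.

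The main obstacle is step (2)–(3): unlike the characteristic-$3$ case in \cite{Faustin}, where the Paperfolding continued fraction is especially clean, one must control how the continued-fraction combinatorics interacts with the field $\F_7$ (resp. $\F_{11}$) — in particular, one has to rule out degeneracies (vanishing leading coefficients, "collisions" forcing a folding of two partial quotients into one of higher degree) that could push some partial quotient past degree $2$, and this requires the algorithm to certify a genuinely finite but a priori large case analysis. Establishing that the transducer/substitution closes up into a finite set of states over these fields — i.e. that the process does not generate partial quotients of unbounded degree — is the crux; once finiteness is certified, the degree bound and the extraction of the exact infimum $p^{-2}$ are a mechanical check. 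I would present the functional equation and the resulting finite automaton explicitly, then state that the bounded-degree verification is carried out by the Python implementation, with the certificate (the list of states and the transition rules) recorded for reproducibility.
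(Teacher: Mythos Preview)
Your reduction step is exactly the paper's: pass from $p(t)$-$LC$ over $\F_q$ to $t$-$LC$ over $\F_p$ via Theorem~\ref{my_result} and the subfield inclusion $\F_p\subseteq\F_q$, so that everything rests on showing $\Upsilon(t)$ is a counterexample to $t$-$LC$ over $\F_7$ and $\F_{11}$. After that, however, you and the paper diverge completely.

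You reformulate $t$-$LC$ through continued fractions, aiming to bound the degrees of the partial quotients of $t^k\Upsilon(t)$ uniformly in $k$ by building a transducer from a functional equation for $\Upsilon$. The paper does \emph{not} touch continued fractions at all. Its reformulation is Lemma~\ref{growcor}: $\Upsilon(t)$ is a counterexample to $t$-$LC$ if and only if the number wall $W_p\big((\Pi^{(1)}_i)_{i\ge 1}\big)$ has windows of uniformly bounded size. The computer-assisted part then shows that this number wall is a two-dimensional $2$-automatic array: the wall is chopped into tiles of a fixed shape, a finite tile alphabet $\Sigma$, a $[2,2]$-morphism $\Psi$, and a number-wall coding $\tau$ are produced by the algorithm in Section~\ref{sect:alg}, and the identity $W_p(S)=\tau(\Psi^\omega(\sigma_1))$ is certified by checking that every $4$-tuple of adjacent tiles obeys the Frame Constraints (Theorem~\ref{FC}). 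Boundedness of windows then falls out automatically, since an unbounded window would have caused Step~1.2.2 to fail. The ``algorithm described in the abstract'' that you invoke is precisely this number-wall tiling procedure, not a continued-fraction transducer; you have projected a different method onto it.

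What each approach buys: the number-wall route packages the entire family $\{t^k\Upsilon(t):k\ge 0\}$ into a single two-dimensional object whose local recurrence (the Frame Constraints) is uniform and mechanical, so the finiteness certificate is a closed list of tiles and $4$-tuples, with no case analysis of how partial quotients merge or split. Your continued-fraction route is closer in spirit to classical Diophantine arguments and, if it closed up, would give the partial-quotient bound (hence the value $p^{-2}$) more transparently; but you correctly identify that controlling degenerations over $\F_7$ and $\F_{11}$ is the crux, and you do not supply a mechanism for it. The paper sidesteps exactly that difficulty by working with Toeplitz determinants rather than convergents.
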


\noindent The proofs of Theorems \ref{F5} and \ref{F7} are achieved by rephrasing $t$-$LC$ in terms of the so-called \textit{Number Wall} of a sequence, which is defined in Section \ref{section2}. For the purposes of this introduction, the number wall of a one dimensional sequence $S$ is a two dimensional sequence derived entirely from $S$. The core idea used to prove Theorem \ref{F5} and Theorem \ref{F7} is similar to that used by Adiceam, Nesharim and Lunnon in \cite{Faustin}\footnote{The implementation of their algorithm can be found at \cite{Fred_Code} and \cite{erez_code}.}. The number wall is shown to be the limiting sequence of a morphism applied to a finite set of tiles. Both the morphism and the tiles are provided explicitly. Theoretically, the algorithm used in \cite{Faustin} could be used to obtain Theorems \ref{F5} and \ref{F7}. However, the time it would take to do so is not manageable, in any way.\\

\noindent In this paper, two key improvements are made that allow for the proof of Theorems \ref{F5} and \ref{F7} with a much shorter runtime. Firstly, the shape of the tiles used has been changed from \cite{Faustin}, decreasing the amount required to cover the number wall of the First Level Paperfolding sequence over $\F_3$ by a large order of magnitude. Furthermore, a new tiling algorithm has been created that foregoes the need to generate large portions of the number wall under consideration, allowing for savings in both time and memory. This is the main contribution to the computing aspect of the new and growing theory of number walls. The authors believe this algorithm will be needed for future developments occurring in this area of research. This is discussed further in Section \ref{sect:end}. See \cite{code} for the codebase. \\

\noindent The paper is organised as follows. Section \ref{section2} introduces the concept of a number wall and provides the key results needed for the proof of Theorems \ref{F5} and \ref{F7}. Central to these results is the theory of automatic sequences: Section \ref{sect:auto} focuses on this and discusses how the automaticity of a sequence is reflected in its number wall. The proof of both Theorem \ref{F5} and Theorem \ref{F7} is computer assisted and the algorithm used is provided in Section \ref{sect:alg}, with details of the implementation of this code into Python found in Appendix \hyperref[sect:imp]{A}. Finally, further open problems and conjectures are stated in Section \ref{sect:end}.

\subsubsection*{Acknowledgements} \noindent The second named author is grateful to his supervisor Faustin Adiceam for his insight, support and supervision for the duration of this project. The same author acknowledges the financial support of the Heilbronn Institute. Both authors would like to thank the referees for their diligent and thorough feedback, and Fred Lunnon for his correction of Conjecture \ref{main_conj}. Finally, the authors would like to acknowledge the assistance given by Research IT and the use of the Computational Shared Facility at The University of Manchester.

\section{The Correspondence Between $p(t)$-$LC$ and Number Walls}\label{section2}
\noindent The following definition is central to the study of number walls.

\begin{definition} \label{Toe}
A matrix $(s_{i,j})$ for $0\le i\le n, 0\le j \le m$ is called \textbf{Toeplitz} if all the entries on a diagonal are equal. Equivalently, $s_{i,j}=s_{i+1,j+1}$ for any $n\in\mathbb{N}$ such that this entry is defined. 
\end{definition}
\noindent Given a doubly infinite sequence $S:= (s_i)_{i\in\mathbb{Z}}$, natural numbers $m$ and $v$, and an integer $n$, define the $(m+1)\times (v+1)$ \textbf{Toeplitz matrix} $T_S(n, m, v):= (s_{i-j+n})_{0\le i \le m, 0\le j \le v}$ as\\
\begin{align*}T_S(n,m,v):=\begin{pmatrix}
s_n & s_{n+1} & \dots & s_{n+v}\\
s_{n-1} & s_{n} & \dots & s_{n+v-1}\\
\vdots &&& \vdots\\
s_{n-m} & s_{n-m+1} & \dots & s_{n-m+v} 
\end{pmatrix}.\end{align*} If $v=m$, this is abbreviated to $T_S(n,m)$. The Laurent series $\Theta(t)=\sum^\infty_{i=1}s_it^{-1}\in\F_{q}((t^{-1}))$ is identified with the sequence $S=(s_i)_{i\ge1}$. Accordingly, let $T_\Theta(n,m,v)=T_S(n,m,v)$.

\begin{definition}\label{nw}
Let $S=(s_i)_{i\in\mathbb{Z}}$ be a doubly infinite sequence over a finite field $\mathbb{F}_q$. The \textbf{number wall} of the sequence $S$ is defined as the two dimensional array of numbers $W_q(S)=(W_{q,m,n}(S))_{n,m\in\mathbb{Z}}$ with \begin{equation*}
    W_{q,m,n}(S)=\begin{cases}\det(T_S(n,m)) &\textup{ if } m\ge0,\\
    1 & \textup{ if } m=-1,\\
    0 & \textup{ if } m<-1. \end{cases}
\end{equation*}  
\end{definition} 
\noindent In keeping with standard matrix notation, $m$ increases as the rows go down the page and $n$ increases from left to right. The remainder of this section only mentions results crucial for this paper. For a more comprehensive look at number walls, see \cite[Section 3]{Faustin}, \cite[pp. 85--89]{Conway1995TheBO}, \cite{Lunnon_2009,My_paper} and \cite{numbwall}. \\

\noindent A key feature of number walls is that the zero entries can only appear in specific shapes:
\begin{theorem}[Square Window Theorem]\label{window}
Zero entries in a Number Wall can only occur within windows; that is, within
square regions with horizontal and vertical edges.
\end{theorem}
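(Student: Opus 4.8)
The plan is to show that a maximal connected block of zero entries in the number wall $W_q(S)$ must be a geometric square. First I would recall the fundamental recurrence relations that govern number walls (the ``Frame'' or ``cross'' relations): for entries away from zero-blocks, the number wall satisfies a local identity expressing each entry in terms of its four orthogonal neighbours, and there is a companion identity involving the four diagonal neighbours. These come from Sylvester-type determinant identities (Dodgson condensation) applied to the Toeplitz matrices $T_S(n,m)$. The key structural input is that these recurrences are ``local'': the value of $W_{q,m,n}(S)$ is constrained by a bounded neighbourhood, so the boundary of any zero region cannot behave arbitrarily.

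The main steps would be as follows. First, suppose $Z$ is a nonempty connected region of zeros, and consider its boundary cells (nonzero cells orthogonally adjacent to $Z$); by the Toeplitz structure and the recurrences, I would argue that along each side of $Z$ the nonzero boundary values are forced to be nonzero and to satisfy a geometric-progression-type relation, which in particular prevents the region from having a ``staircase'' or re-entrant boundary. Second, I would show that a zero region cannot be infinite in any direction: an infinite strip of zeros would force, via the recurrences, an inconsistency with the definition $W_{q,-1,n}=1$ and $W_{q,m,n}=0$ for $m<-1$, or more directly would force the sequence $S$ to be degenerate in a way incompatible with being a genuine doubly infinite sequence over $\F_q$. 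Third, combining convexity of the boundary in each of the four axis directions with the fact that the region is finite, one concludes the region is a rectangle; a further parity/symmetry argument using the recurrence in both the horizontal and vertical directions forces the rectangle to be a square. Since this is a known classical result, I would most likely cite the proof in \cite[Section 3]{Faustin} or \cite{Conway1995TheBO} and \cite{numbwall} rather than reproduce it in full.

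The hard part will be making the ``boundary cannot be re-entrant'' step rigorous: one must carefully track which of the cross- and diagonal-recurrences remain valid adjacent to a zero-block (the standard recurrences degenerate or require division by zero precisely there), and invoke the correct ``extended'' relations that hold on the frame of a window. This bookkeeping — identifying exactly which determinant identity applies at each boundary configuration, and checking that it forbids every non-square shape — is where essentially all the content lies; the finiteness and square (as opposed to merely rectangular) conclusions then follow comparatively formally. Given that Theorem \ref{window} is quoted here as a known input for the rest of the paper, I would keep the treatment brief and defer the detailed combinatorial case analysis to the references, stating only the recurrence identities needed and the shape of the argument.
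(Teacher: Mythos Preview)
The paper does not actually prove Theorem~\ref{window}: its entire proof is the single line ``See \cite[p.~9]{numbwall}.'' Your proposal, after sketching a strategy, likewise ends by deferring to \cite{numbwall}, \cite{Conway1995TheBO}, and \cite{Faustin}, so in substance you and the paper agree.

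That said, your sketch contains a genuine error in Step~2. You assert that a zero region cannot be infinite, arguing that an infinite strip would contradict $W_{q,-1,n}=1$ or force $S$ to be ``degenerate in a way incompatible with being a genuine doubly infinite sequence.'' This is false: any sequence $S$ over $\F_q$ satisfying a linear recurrence of order $d$ has $W_{q,m,n}(S)=0$ for all $m\ge d$ and all $n$, i.e.\ an entire half-plane of zeros. Such sequences are perfectly legitimate doubly infinite sequences, and the Square Window Theorem does not exclude them --- an infinite half-plane is a (degenerate) square window. Finiteness of windows is precisely the \emph{additional} hypothesis singled out later in Lemma~\ref{growcor} as equivalent to being a counterexample to $t$-$LC$; it is not part of Theorem~\ref{window} itself. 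If you want to keep a sketch, drop the finiteness claim entirely and argue only the shape constraint (zeros form squares, possibly semi-infinite), which is all the theorem asserts.
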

\begin{proof}
See \cite[p. 9]{numbwall}.
\end{proof}

\noindent Theorem \ref{window} implies that the border of a window is always the boundary of a square with nonzero entries. This motivates the following definition: 
\begin{definition}
The entries of a number wall surrounding a window are referred to as the \textbf{inner frame}. The entries surrounding the inner frame form the \textbf{outer frame}. 
\end{definition}
\noindent The entries of the inner frame are extremely well structured:
\begin{theorem}\label{ratio ratio}
The inner frame of a window with side length $l\ge1$ is comprised of 4 geometric sequences. These are along the top, left, right and bottom edges and they have ratios $P,Q,R$ and $S$ respectively with origins at the top left and bottom right. Furthermore, these ratios satisfy the relation \[\frac{PS}{QR}=(-1)^{l}.\]
\end{theorem}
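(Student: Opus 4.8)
## Proof Proposal for Theorem 2.9 (the inner frame ratio relation)

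The plan is to set up the inner frame entries explicitly in terms of the number wall values $W_{q,m,n}(S)$ on the boundary square of a window of side length $l$, and then exploit the known recurrence relations among number wall entries (the ``frame constraints'' / Toeplitz determinant identities referenced in \cite{numbwall}) to extract each of the four ratios. First I would fix coordinates: place the window so that its interior of zeros occupies an $l\times l$ block, so the inner frame consists of the four lines of entries immediately adjacent to it, each of length $l+2$ at the corners but with $l$ ``interior'' entries along each side. The claim that each side is a geometric sequence is the geometric-progression property of number wall frames; I would either cite it directly from \cite{numbwall} or re-derive it from the cross-ratio / five-term relation that governs adjacent $2\times2$ determinants of Toeplitz matrices (the so-called ``Frame Theorem''). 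The ratios $P, Q, R, S$ are then defined unambiguously as the common ratio of the top edge read left-to-right, the left edge read top-to-bottom, the right edge read top-to-bottom, and the bottom edge read left-to-right, with the stated origins at the top-left and bottom-right corners.

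The core of the argument is the relation $PS/(QR) = (-1)^l$. My approach would be to track a single entry around the frame: starting from the top-left corner entry, multiply by powers of the ratios to express the bottom-right corner entry in two different ways — going ``clockwise'' (top edge then right edge) versus ``counterclockwise'' (left edge then bottom edge). Concretely, if $c_{\mathrm{TL}}$ and $c_{\mathrm{BR}}$ denote the two corner values, one path gives $c_{\mathrm{BR}} = c_{\mathrm{TL}}\cdot P^{a}\cdot R^{b}$ up to a correction, and the other gives $c_{\mathrm{BR}} = c_{\mathrm{TL}}\cdot Q^{a}\cdot S^{b}$, so that consistency forces $P^a R^b / (Q^a S^b)$ to equal a fixed sign. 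The sign itself must come from the one place where the geometric structure genuinely interacts with linear algebra: the determinant identity relating the four corner entries of the frame to each other (an ``$l$-dependent'' Sylvester-type or Dodgson condensation identity for the bordered Toeplitz matrix), which is where the factor $(-1)^l$ is born. I would isolate this by writing the $(l+2)\times(l+2)$ Toeplitz determinant whose interior $l\times l$ minor vanishes, and applying Dodgson/Desnanot–Jacobi condensation, which produces exactly a sign $(-1)^l$ when expanding a determinant with an $l\times l$ block of zeros in the corner.

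The main obstacle I anticipate is bookkeeping the indices and the precise placement of the ``origins'' of the geometric sequences so that the exponents of $P, Q, R, S$ in the two corner-to-corner paths match up cleanly; a careless choice of where each geometric progression starts will scramble the sign. To control this I would adopt the convention already implicit in the statement — ratios $P$ and $Q$ emanate from the top-left corner, ratios $R$ and $S$ terminate at (equivalently, emanate backwards from) the bottom-right corner — and verify the formula first on the smallest nontrivial case $l=1$ (a single zero entry surrounded by a $3\times 3$ frame), where $PS/(QR) = -1$ can be checked by hand from a $3\times3$ Toeplitz determinant with a zero in the center, and then on $l=2$ to confirm the sign genuinely alternates. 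Once the base cases pin down the conventions, the general identity follows from the condensation argument, with the parity of $l$ entering solely through the size of the vanishing block. I would also note that nonvanishing of the inner frame entries (guaranteed by Theorem~\ref{window}, since they lie strictly outside the window) is what makes all the divisions legitimate and the ratios well-defined.
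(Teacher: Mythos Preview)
The paper does not actually prove this theorem: its entire proof is the single line ``See \cite[p11]{numbwall}.'' So there is no approach in the paper to compare against, and your proposal is an attempt to supply a proof where the authors only cite one.

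Your ingredients are correct --- the geometric-progression property of each edge and the ratio relation both ultimately come from the Desnanot--Jacobi (Dodgson condensation / Sylvester) identity applied to the bordered Toeplitz determinants around the window --- but the organisation of your argument is off in a way that matters. The ``two paths around the frame'' step you lead with does \emph{not} force $P^{a}R^{b}/(Q^{a}S^{b})$ to equal a sign; it forces it to equal $1$. Concretely, with the origin conventions in the statement, matching the shared corners gives $A_{0}P^{\,l+1}=C_{0}R^{\,l+1}$ at the top-right and $A_{0}Q^{\,l+1}=C_{0}S^{\,l+1}$ at the bottom-left, and dividing yields only $(PS/QR)^{\,l+1}=1$. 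Over a general field (and in particular over $\mathbb{F}_{q}$ with $q$ odd) this is strictly weaker than $PS/QR=(-1)^{l}$, so the perimeter-walk cannot be the backbone of the proof with the sign patched in afterwards.

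What actually carries the result is exactly the determinant identity you mention second: applying Desnanot--Jacobi to the $(m+2)\times(m+2)$ Toeplitz determinants whose interior $m\times m$ minor sits inside the window simultaneously shows that consecutive edge entries are in geometric progression \emph{and} produces the relation $A_{k}D_{k}=(-1)^{l}B_{k}C_{k}$ between corresponding entries on opposite edges (this is the identity behind the second Frame Constraint $D_{k}=(-1)^{lk}B_{k}C_{k}/A_{k}$ in Theorem~\ref{FC}); the relation $PS/QR=(-1)^{l}$ is then read off directly from the $k=0$ and $k=1$ cases. So you should invert the emphasis: make the condensation identity the primary engine, derive both the geometric progressions and the cross-relation from it, and drop (or demote to a consistency check) the perimeter-walk. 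Your plan to sanity-check the sign on $l=1$ and $l=2$ is still a good idea for fixing the index conventions.
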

\begin{proof}
   See \cite[p11]{numbwall}.
\end{proof}
\noindent See Figure \ref{basicwondow} for an example of a window of side length $l$. For $i\in\{0,\dots,l+1\}$, the inner and outer frames are labelled by the entries $A_i,B_i,C_i,D_i$ and $E_i,F_i,G_i,H_i$ respectively. The ratios of the geometric sequences comprising the inner frame are labelled as $P,Q,R$ and $S$.
\begin{figure}[H]
\centering
\includegraphics[width=0.5\textwidth]{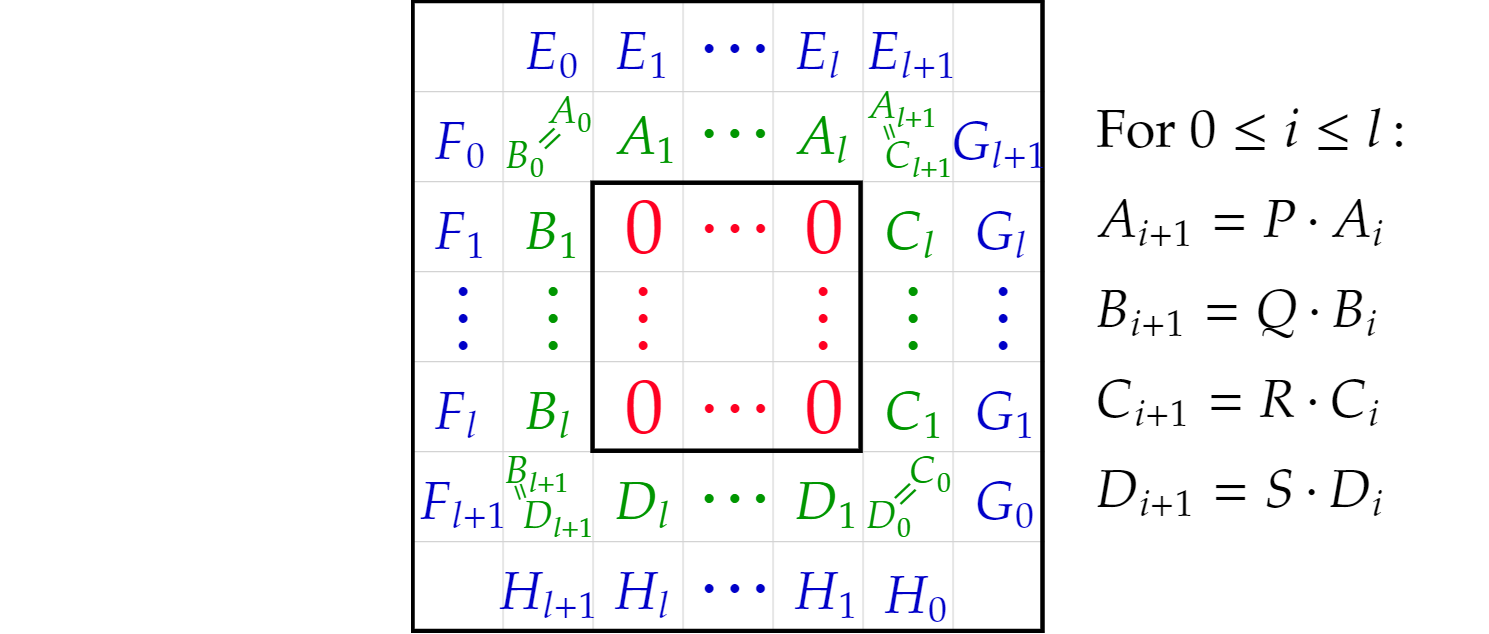}
\caption{Illustration of a window in a number wall. The window, inner frame and outer frame are in red, green and blue, respectively.}
\label{basicwondow}
\end{figure}

\noindent Calculating a number wall from its definition is a computationally exhausting task. The following theorem gives a simple and far more efficient way to calculate the $m^\nth$ row using the previous rows.
\begin{theorem}[Frame Constraints] \label{FC}
Given a doubly infinite sequence $S=(s_{i})_{i\in\mathbb{Z}}$ over the field $\K$, the number wall $W_{\K}(S):=(W_{m,n})_{n,m\in\mathbb{Z}}$ can be generated by a recurrence in row $m\in\mathbb{Z}$ in terms of the
previous rows. More precisely, with the notation of Figure \ref{basicwondow},\begin{equation*}
    W_{m,n}=\begin{cases}
    \begin{aligned}
    &0 &&\textup{ if } m<-1 \textup{ or if } (m,n)\textup{ is within a window;}&\\
    &1 &&\textup{ if } m=-1;&\\
    &s_n &&\textup{ if } m=0;&\\
    &\frac{W_{m-1,n}^2-W_{m-1,n-1}W_{m-1n+1}}{W_{m-2,n}} &&\textup{ if } m>0\textup{ and }W_{m-2,n}\neq0;&\\
    &D_k=\frac{(-1)^{l\cdot k}B_kC_k}{A_k}& &\textup{ if } m>0\textup{ and }W_{m-2,n}=0=W_{m-1,n};&\\
    &H_k=\frac{\frac{QE_k}{A_k}+(-1)^{k}\frac{PF1_k}{B_k}-(-1)^k\frac{SG_k}{C_k}}{R/D_k}& &\textup{ if } m>0 \textup{ and } W_{n,m-2}=0\neq W_{n,m-1}.&
    \end{aligned}
    \end{cases}
\end{equation*}
\end{theorem}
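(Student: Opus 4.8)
The plan is to establish the five cases separately, ordered from the trivial to the substantive. The first three cases ($m<-1$, $m=-1$, $m=0$) are immediate: they simply restate Definition \ref{nw} together with the observation from Theorem \ref{window} that zero entries occur within windows, so ``$(m,n)$ within a window'' forces $W_{m,n}=0$ by definition of a window as a maximal region of zeros. Nothing needs to be proved here beyond unwinding notation. The heart of the matter is the generic recurrence in the fourth case, and then the two ``degenerate'' cases that patch it up precisely when the denominator $W_{m-2,n}$ vanishes.

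For the fourth case, the key tool is a determinantal identity of Dodgson/Desnanot--Jacobi (condensation) type. One writes $W_{m,n}=\det T_S(n,m)$ and observes that the four $m\times m$ minors obtained by deleting a corner row-and-column pair from the $(m+1)\times(m+1)$ Toeplitz matrix $T_S(n,m)$ are themselves number wall entries of row $m-1$: because of the Toeplitz structure, deleting the last row and last column gives $W_{m-1,n}$ (up to an index shift), deleting first row/first column gives $W_{m-1,n}$ again, and the two mixed deletions give $W_{m-1,n-1}$ and $W_{m-1,n+1}$; the central $(m-1)\times(m-1)$ minor is $W_{m-2,n}$. Dodgson condensation then reads exactly as $W_{m,n}W_{m-2,n}=W_{m-1,n}^{2}-W_{m-1,n-1}W_{m-1,n+1}$, giving the stated formula whenever $W_{m-2,n}\neq0$. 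I would take care to check the sign and the index bookkeeping forced by the ``reversed'' (anti-diagonal constant) orientation of the Toeplitz matrix, since a misplaced sign here propagates everywhere; this sign check is the first genuine obstacle.

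The remaining two cases handle the situation $W_{m-2,n}=0$, where the condensation identity degenerates and must be replaced by information coming from the geometry of the window whose interior contains $(m-2,n)$. Here I would invoke Theorem \ref{ratio ratio}: the inner frame of that window is built from four geometric sequences with ratios $P,Q,R,S$ satisfying $PS/QR=(-1)^{l}$, and the entries $A_i,B_i,C_i,D_i$ (inner) and $E_i,F_i,G_i,H_i$ (outer) are labelled as in Figure \ref{basicwondow}. When $(m,n)$ lies on the bottom inner edge with the entry directly above it ($W_{m-1,n}$) still inside the window (i.e.\ $W_{m-2,n}=0=W_{m-1,n}$), the bottom-edge entry $D_k$ is pinned down by the ratio relation of Theorem \ref{ratio ratio}: tracking the geometric sequence from the already-known corner value together with $PS/QR=(-1)^l$ yields $D_k=(-1)^{lk}B_kC_k/A_k$. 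When instead $(m,n)$ lies on the outer frame just below the window ($W_{m-2,n}=0\neq W_{m-1,n}$), one needs one more relation among the outer-frame entries; this is a known linear constraint tying $E_k,F_k,G_k,H_k$ to the inner-frame data and the four ratios, which rearranges into the displayed expression for $H_k$. Both of these reduce to the structural results of Theorems \ref{window} and \ref{ratio ratio} plus careful edge-indexing, and can be cited to \cite{numbwall} where the window analysis is carried out in full; I would either reproduce the short linear-algebra derivation of the outer-frame relation or reference it directly. The main obstacle overall is not any single deep step but the consistent handling of indices, orientations, and signs across the Toeplitz convention used here, the window-corner conventions of Figure \ref{basicwondow}, and the cited literature; once those are fixed, every case is a short computation.
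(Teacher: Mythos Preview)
Your proposal is correct and in fact goes well beyond what the paper does: the paper's ``proof'' of Theorem~\ref{FC} is simply the citation ``See \cite[p.~11]{numbwall}'', with no argument given. The sketch you outline---trivial cases from Definition~\ref{nw}, Desnanot--Jacobi/Dodgson condensation applied to the Toeplitz minors for the generic recurrence, and then the window geometry of Theorems~\ref{window} and~\ref{ratio ratio} for the two degenerate cases---is precisely the argument carried out in Lunnon's paper \cite{numbwall} to which the authors defer, so your approach is the same as the one underlying the cited result.
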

\begin{proof}
    See \cite[p11]{numbwall}
\end{proof}
\noindent The value of $k$ above is found in the natural way from the value of $m,n$ and the side length $l$. The final three equations above are known as the \textbf{First}, \textbf{Second} and \textbf{Third Frame Constraint Equations}. These allow the number wall of a sequence to be considered independently of its original definition in terms of Toeplitz matrices. It is simple to see that a finite sequence of length $r$ generates a number wall in the shape of an isosceles
triangle with depth $\left\lfloor\frac{r-1}{2}\right\rfloor$. If $S$ is a finite sequence in $\F_q$, then denote the finite number wall of $S$ as $W_q(S)$. To make number walls visually accessible, each entry is given a unique colour depending on its value (See Figure \ref{smallrand}).\begin{figure}[H]
\centering
\includegraphics[width=0.3\textwidth]{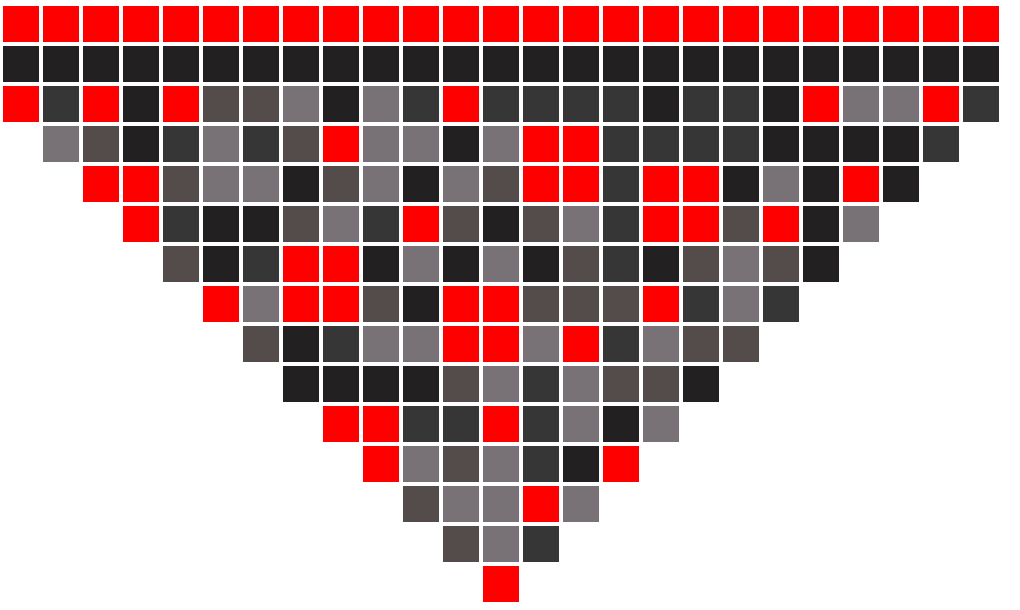}
\caption{The number wall of a sequence of length 25 generated uniformly and randomly over $\F_5$. The zero entries are coloured in red, with the nonzero values assigned a shade of grey, with 1 being the darkest and 4 being the lightest. The top row (red) has index $-2$. It is included as it initiates the recurrence relations given by Theorem \ref{FC} that are used to generate this image.}
\label{smallrand}
\end{figure}
\noindent The following result from \cite[p5]{Faustin} rephrases the $t$-adic Littlewood conjecture in terms of number walls.
\begin{lemma} \label{growcor}
Let $\Theta(t)=\sum^\infty_{i=1}s_it^{-i}\in\mathbb{F}_q((t^{-1}))$ be a Laurent series and $S=(s_i)_{i\in\mathbb{N}}$ be the sequence of its coefficients. Then, $\Theta(t)$ is a counterexample to $t$-LC if and only if there exists an $l$ in the natural numbers such that the number wall of $(s_i)_{i\ge0}$ has no window of size larger than $l$.
\end{lemma}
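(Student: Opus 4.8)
The plan is to translate the $t$-adic Littlewood Conjecture into a statement about the degrees of the best rational approximations, and then identify those degrees with the locations and sizes of windows in the number wall. First I would recall that for a Laurent series $\Theta(t)\in\F_q((t^{-1}))$ and a polynomial $N(t)\in\F_q[t]$, the quantity $|N(t)|\cdot|N(t)|_t\cdot\intdist{N(t)\Theta(t)}$ is a power of $q$; writing $N(t)=t^a M(t)$ with $t\nmid M(t)$, one has $|N(t)|=q^{\deg N}$, $|N(t)|_t=q^{-a}$, and $\intdist{N(t)\Theta(t)}=q^{-\deg(\text{denominator of best approx})}$-type expression. Thus $\Theta(t)$ is a counterexample to $t$-LC exactly when this product stays bounded below by some fixed $q^{-l}$, i.e. when for every $N(t)$ the quantity $\deg N - \nu_t(N) - (\text{how well }N\Theta\text{ approximates a polynomial})$ never drops below $-l$.

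The second step is to invoke the standard dictionary between simultaneous denominators of good approximations and vanishing minors of the associated Toeplitz/Hankel system: $N(t)\Theta(t)$ being close to a polynomial means the linear system whose coefficient matrix is a $T_S(n,m)$-type Toeplitz matrix has a nontrivial solution, which happens precisely when the corresponding number-wall entry $W_{q,m,n}(S)$ vanishes. Multiplying $N(t)$ by a power of $t$ shifts the column index $n$, which is exactly the degree of freedom measured by $|N(t)|_t$; so the presence of a window of side length $l+1$ in $W_q(S)$ corresponds to the existence of some $N(t)$ (of appropriate degree, with an appropriate power of $t$ extracted) achieving $|N(t)|\cdot|N(t)|_t\cdot\intdist{N(t)\Theta(t)}=q^{-(l+1)}$ or smaller. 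Conversely, a window of side length at most $l$ everywhere bounds the product below by $q^{-l}$. This is where the Square Window Theorem (Theorem \ref{window}) is essential: it guarantees the zero sets are genuine squares, so "window size'' is well defined and controls the approximation exponent cleanly, rather than the zeros forming irregular regions.

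Putting these together: $\Theta(t)$ is a counterexample to $t$-LC $\iff$ $\inf_{N} |N(t)|\cdot|N(t)|_t\cdot\intdist{N(t)\Theta(t)}>0$ $\iff$ this infimum equals $q^{-l}$ for some finite $l$ $\iff$ windows in $W_q(S)$ have uniformly bounded side length $\le l$. The main obstacle I expect is the bookkeeping in the correspondence between the three parameters $(\deg N, \nu_t(N), \text{approximation quality})$ and the two number-wall coordinates $(m,n)$ together with the window side length — in particular, verifying that the extraction of the factor $t^{\nu_t(N)}$ corresponds exactly to the horizontal translation freedom in choosing $n$, and that a window of side $l$ forces the sharp exponent $q^{-l}$ rather than merely $q^{-O(l)}$. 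Once the inner-frame structure (Theorem \ref{ratio ratio}) and the Frame Constraints (Theorem \ref{FC}) are used to pin down that the nonzero border of a size-$l$ window witnesses an approximation of exponent exactly $-l$, the equivalence follows, and one cites \cite[p5]{Faustin} for the detailed verification of this dictionary.
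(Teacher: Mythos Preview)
The paper does not actually prove this lemma: it simply quotes the result from \cite[p5]{Faustin} and moves on. Your proposal sketches the underlying dictionary (Toeplitz-determinant vanishing $\leftrightarrow$ existence of a good polynomial multiplier, horizontal shift in $n$ $\leftrightarrow$ extraction of a power of $t$) and then defers to the same citation, so you are entirely aligned with the paper's treatment and in fact supply more detail than the paper itself does.

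One minor correction to your sketch: you do not need the inner-frame ratios (Theorem~\ref{ratio ratio}) or the Frame Constraints (Theorem~\ref{FC}) to pin down the sharp exponent. A window of side length exactly $l$ means the surrounding $(l{+}1)\times(l{+}1)$ Toeplitz minors are nonzero while the $l\times l$ ones inside vanish; this rank information alone gives both directions of the bound $|N|\cdot|N|_t\cdot\intdist{N\Theta}\ge q^{-l}$ with equality attained. The only structural input genuinely required is the Square Window Theorem (Theorem~\ref{window}), which you correctly identify.
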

 \noindent Combining Lemma \ref{growcor} with Theorem \ref{my_result} reduces the proof of Theorems \ref{F5} and \ref{F7} to proving that the number wall of the First and the Second Level Paperfolding Sequences, over their respective finite fields, have bounded window size. These sequences are extremely structured, and the proof of Theorems \ref{F5} and \ref{F7} is achieved by proving their respective number walls inherit this structure. To formalise this idea, Section \ref{sect:auto} introduces the concept of automatic sequences. 
\section{Detecting Automaticity in a Number Wall}\label{sect:auto}
\noindent This section only covers the basic properties of automatic sequences needed for this paper. See \cite{Allouche_Shallit_2003} for more on this theory.\\

\noindent Let $\Sigma$ be a finite alphabet and let $\Sigma^*$ be the set of all (finite and infinite) words made up from $\Sigma$, including the empty word. For finite words $x,y\in\Sigma^*$, let $xy$ denote the concatenation of $y$ to the right side of $x$. Given another finite alphabet $\Delta$, a function $\phi:\Sigma^*\to\Delta^*$ is called a \textbf{morphism} if $\phi(xy)=\phi(x)\phi(y)$ for any finite words $x,y\in\Sigma^*$. Furthermore, let $L(w)$ denote the length of the finite word $w\in\Sigma^*$. If $k$ is a natural number, then $\phi$ is a \textbf{uniform} $k$-\textbf{morphism} if $L(\phi(w))=k\cdot L(w)$ for any finite word $w$ in $\Sigma^*$.\\

\noindent Let $\phi:\Sigma^*\to\Sigma^*$ be a morphism. An element of $\Sigma$ is called $\phi$-\textbf{prolongable} if it is the first letter of its own image under $\phi$. If a sequence $S=(s_n)_{n\ge0}$ is defined over $\Sigma$ and satisfies $\phi(S)=S$, then $S$ is a \textbf{fixed point} of $\phi$. Trivially, every sequence is the fixed point of the identity 1-morphism. However, ignoring this uninteresting case, one has that a sequence $S=(s_n)_{n\ge0}$ is a fixed point of a morphism $\phi$ if and only if $s_0$ is $\phi$-prolongable and $$S=\phi^\omega(s_0):=\lim_{i\to\infty}\phi^i(s_0),$$ where $\phi^i(s_0):=\phi(\phi^{i-1}(s_0))$ is defined iteratively in the natural way.\\

\noindent For a finite alphabet $\Delta$ which is distinct from $\Sigma$ and a natural number $d$, a $d$-\textbf{coding} is a uniform $d$-morphism $\tau:\Sigma\to\Delta^d$, where $\Delta^d$ is the set of all finite words of length $d\in\N$ made up from $\Delta$. It is not necessary that $\tau$ be injective. The notation $\tau(S)$ is shorthand for applying $\tau$ to each element of the sequence $S$. The following concept is at the heart of the proof of Theorems \ref{F5} and \ref{F7}.
\begin{definition}
    Let $k$ be a natural number. An infinite sequence $S=(s_n)_{n\ge0}$ is called $k$-\textbf{automatic} if for each $n\ge0$, $s_n$ can be derived from a finite state automaton which takes the base-$k$ digits of $n$ as an input.  
\end{definition}
\noindent For the purposes of this paper, the following theorem of Cobham \cite{Cobham} (rephrased in modern language in \cite[Theorem 6.3.2]{Allouche_Shallit_2003}) can be seen as an alternative definition for automatic sequences:
\begin{theorem}[Cobham's Theorem]
    A sequence $S$ is \textbf{automatic} if and only there exist natural numbers $k$ and $d$ such that $S$ is the image by a $d$-coding of a fixed point of a uniform k-morphism.
\end{theorem}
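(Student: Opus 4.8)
\noindent The plan is to establish the two implications separately by passing between the two equivalent guises of automaticity: a deterministic finite automaton with output (DFAO) and a coded fixed point of a uniform morphism. I fix once and for all the convention that a DFAO reads the base-$k$ digits of its argument most-significant-digit first; this is harmless because the class of $k$-automatic sequences is insensitive to digit order (an elementary fact also due to Cobham), and throughout I assume $k\ge 2$.

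\noindent \emph{From automatic to a coded fixed point.} Suppose $S$ is $k$-automatic, realised by a DFAO with state set $Q$, transition function $\delta$, initial state $q_0$ and output map $\mu:Q\to\Delta$. After the standard normalisation that makes reading a leading $0$ fix the initial state (pass to the minimal automaton, or identify $q_0$ with $\delta(q_0,0)$), define a uniform $k$-morphism $\phi$ on the alphabet $Q$ by $\phi(q)=\delta(q,0)\,\delta(q,1)\cdots\delta(q,k-1)$. Then $q_0=(\phi(q_0))_0$ is $\phi$-prolongable, and an induction on the number of base-$k$ digits of $n$ shows that the $n$-th letter of $\phi^{\omega}(q_0)$ is exactly the state reached by the automaton on input $n$; applying $\mu$, which is a $d$-coding with $d=1$, recovers $S$. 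Hence $S$ is the image under a $d$-coding of a fixed point of a uniform $k$-morphism, with this same $k$ and with $d=1$.

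\noindent \emph{From a coded fixed point to automatic.} Conversely, suppose $S=\tau(T)$ where $T=\phi^{\omega}(a)$ is the fixed point of a uniform $k$-morphism $\phi$ on an alphabet $\Sigma$ with $\phi$-prolongable first letter $a$, and $\tau:\Sigma\to\Delta^{d}$ is a $d$-coding. First I would show $T$ is $k$-automatic by the construction dual to the above: the DFAO with state set $\Sigma$, initial state $a$, transition $\delta(b,j)=(\phi(b))_{j}$ and output the identity reaches state $T_{n}$ on input $n$ (by induction, using $T_{kn+j}=(\phi(T_{n}))_{j}$, with leading zeros harmless since $a=(\phi(a))_{0}$). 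It then remains to see that the length-$d$ block substitution $S_{n}=(\tau(T_{\lfloor n/d\rfloor}))_{n\bmod d}$ is still automatic. For this I would build a DFAO for $S$ whose state set is the product of the state set of the automaton for $T$ with $\Z/d\Z$: the $\Z/d\Z$-coordinate computes $n\bmod d$ digit by digit, and the key observation is that appending a base-$k$ digit $b$ to a prefix $m$ of $n$ appends the single base-$k$ digit $c=\lfloor(k\,(m\bmod d)+b)/d\rfloor$ to $\lfloor m/d\rfloor$, because $k\,(m\bmod d)+b<kd$; feeding these carry digits into the automaton for $T$ leaves it, at the end of the input, in state $T_{\lfloor n/d\rfloor}$, whereupon the output map returns the $(n\bmod d)$-th coordinate of $\tau$ applied to that letter. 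This exhibits $S$ as $k$-automatic.

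\noindent The step I expect to be the main obstacle is precisely this last one: when $d$ is not a power of $k$ one cannot merely pad base-$k$ expansions, so one must check carefully that division by $d$ is \emph{synchronous} digit by digit in base $k$ with the residue modulo $d$ carried along — the inequality $k\,(m\bmod d)+b<kd$ being exactly what keeps the carry a single base-$k$ digit — and that leading zeros continue to be absorbed by the normalised automaton. The remaining ingredients (the digit-order convention, and the normalisation $\delta(q_0,0)=q_0$ underlying $\phi$-prolongability in the first implication) are routine but load-bearing, and each must be in place for the inductions to close.
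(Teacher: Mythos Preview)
Your proof is correct and follows the standard route to Cobham's theorem, essentially the argument in Allouche--Shallit \cite[Theorem~6.3.2]{Allouche_Shallit_2003}. Note, however, that the paper itself does not prove this statement at all: it is quoted as a classical result, attributed to Cobham and to \cite[Theorem~6.3.2]{Allouche_Shallit_2003}, and is used merely as an alternative characterisation of automaticity. So there is no ``paper's own proof'' to compare against; what you have written is a faithful sketch of the proof one finds in the cited reference, including the slightly nonstandard wrinkle that the paper allows a $d$-coding with $d>1$ rather than just a letter-to-letter coding. Your handling of that wrinkle --- the synchronous base-$k$ division by $d$ with the residue $m\bmod d$ carried as state, justified by $k(m\bmod d)+b<kd$ --- is exactly right and is the only part that goes beyond the bare $d=1$ version of Cobham's theorem.
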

\noindent The following example is central to this paper. The notation used below differs from what is used in the previous definitions, since it specifically refers to the so-called \textit{First Level Paperfolding Sequence} which will appear numerous times throughout this work. Hence, this specialised notation is introduced and fixed from now on.
\begin{example}\label{PF1DEF}
    Let $\Gamma=\{A,B,C,D\}$ be an alphabet and define the 2-morphism $\psi:\Gamma\to\Gamma^*$ as \begin{align}
   \label{PF1 sub rules}  &\psi(A)=AB& &\psi(B)=CB& &\psi(C)=AD& &\psi(D)=CD.&
\end{align}Let $\Delta=\{0,1\}$. Then the First Level Paperfolding sequence can be constructed as $\pi(\psi^\omega(A))$, where $\pi:\Gamma\to\Delta$ is a 1-coding defined by\begin{align}
    &\pi(A)=0& &\pi(B)=0& &\pi(C)=1& &\pi(D)=1.&
\end{align}
\end{example}
\noindent The choice of $d$-coding generating a particular $k$-automatic sequence is not unique. For example, if $S$ is a $k$-automatic sequence generated by a $k$-morphism $\phi$ and $\tau$ is a $d$-coding on a finite alphabet $\Sigma=\{\sigma_0,\dots,\sigma_n\}$, one can define a $kd$-coding called the $k$-\textbf{compression} of $\tau'$ by setting $\tau'(\sigma_i)=\tau(\phi(\sigma_i))$. Applying the $k$-compression $n$ times will be referred to as taking the $k^n$-compression.
\begin{example}\label{PF1-4-comp}
    The $2$-compression of the First Level Paperfolding sequence is given by the same 2-morphism $\psi$ as in (\ref{PF1 sub rules}) and by the $2$-coding $\pi'$ defined as\begin{align*}
    &\pi'(A)=00& &\pi'(B)=10& &\pi'(C)=01& &\pi'(D)=11.&
\end{align*}
    Applying the 2-compression again returns the 4-compression, which is a $4$-coding: 
    \begin{align*}
    &\pi''(A)=0010& &\pi''(B)=0110& &\pi''(C)=0011& &\pi''(D)=0111.&
\end{align*}
\end{example}
\subsection{Two-Dimensional automatic Sequences}
\noindent The content of this subsection is adapted from \cite[Chapter 14]{Allouche_Shallit_2003} by Allouche and Shallit. For more information on two-dimensional automatic sequences, see \cite{Salon1989} and \cite{Salon2}.\\

\noindent Automatic sequences have a natural generalisation to higher dimensions. For the purposes of proving Theorems \ref{F5} and \ref{F7}, only two-dimensional automatic sequences are required and are thus defined below. \\

\noindent Let $\Sigma$ be a finite alphabet. A two-dimensional word $\mathbf{w}$ made up from $\Sigma$ is defined as a matrix $$\textbf{w}=(\sigma_{m,n})_{0\le m \le M, 0\le n \le N} ~~~~\text{ where }~~~~ M,N\ge0.$$ In agreement with standard matrix terminology, $\textbf{w}_{m,n}=\sigma_{m,n}\in\Sigma$ denotes the entry of $\mathbf{w}$ in column $n$ and row $m$. \\

\noindent Let $\Sigma^*_2$ denote the set of (finite and infinite) two-dimensional words made up from $\Sigma$. Given another (potentially identical) finite alphabet $\Delta$ and $k,l\in\N$, a \textbf{two-dimensional uniform $[k,l]$-morphism }$\phi:\Sigma^*_2\to\Delta^*_2$ is a function that satisfies the following two properties: \begin{enumerate}
    \item For any letter $\sigma\in\Sigma$, $\phi(\sigma)$ returns a $k\times l$ matrix in $\Sigma^*_2$.
    \item Let $M,N\in\N$ and let $\textbf{w}=(\sigma_{m,n})_{0\le m \le M,0\le n \le N},\in\Sigma_2^*$ be a finite two-dimensional word. Then, the function $\phi$ satisfies \[\phi\begin{pmatrix}
        \sigma_{0,0}&\sigma_{0,1}&\cdots &\sigma_{0,N}\\\vdots&&&\vdots\\\sigma_{M,0}&\sigma_{M,1}&\cdots&\sigma_{M,N}
    \end{pmatrix}=~\begin{pmatrix}
        \phi(\sigma_{0,0})&\phi(\sigma_{0,1})&\cdots &\phi(\sigma_{0,N})\\\vdots&&&\vdots\\\phi(\sigma_{M,0})&\phi(\sigma_{M,1})&\cdots&\phi(\sigma_{M,N})
    \end{pmatrix}\]
\end{enumerate} If $\Delta=\Sigma$, then a letter $\sigma_0\in\Sigma$ is called $\phi$\textbf{-prolongable} if $\phi(\sigma_0)_{0,0}=\sigma_0$. Just as in the one-dimensional case, define $\phi^k(\sigma_0)$ iteratively as $\phi^k(\sigma_0)=\phi(\phi^{k-1}(\sigma_0))$ and $\phi^0(\sigma_0)=\sigma_0$. Then, the limiting sequence $\phi^\omega(\sigma_0)=\lim_{k\to\infty}\phi^k(\sigma_0)$ exists if and only if $\sigma_0$ is $\phi$-prolongable. 
\begin{example}\label{2dim}
    Let $\Sigma=\{0,1\}$ and define the two-dimensional $[2,2]$-morphism $\phi:\Sigma^*_2\to\Sigma^*_2$ as \begin{align*}
        &\phi(0)=\begin{matrix}
            0&1\\1&0
        \end{matrix}& &\phi(1)=\begin{matrix}
            1&0\\0&1
        \end{matrix}&
    \end{align*}
\noindent Then, applying $\phi$ repeatedly to the $\phi$-prolongable element $0\in\Sigma$ gives \begin{align*}
    &\phi^0(0)=0& &\phi^1(0)=\begin{matrix}
            0&1\\1&0
        \end{matrix}& &\phi^2(0)=\begin{matrix}
            0&1&1&0\\1&0&0&1\\1&0&0&1\\0&1&1&0
        \end{matrix}&&\phi^3(0)=\begin{matrix}
            0&1&1&0&1&0&0&1\\1&0&0&1&0&1&1&0\\1&0&0&1&0&1&1&0\\0&1&1&0&1&0&0&1\\1&0&0&1&0&1&1&0\\0&1&1&0&1&0&0&1\\0&1&1&0&1&0&0&1\\1&0&0&1&0&1&1&0
        \end{matrix}&
\end{align*}\end{example}
\noindent The two-dimensional version of a $d$-coding is now defined.
\begin{definition}
    Let $\Delta$ and $\Sigma$ be distinct finite alphabets and let $k$ and $l$ be natural numbers. Then, a two-dimensional $[k,l]$-\textbf{coding} is a $[k,l]$-morphism $\tau:\Sigma_2^*\to\Delta_2^*$. 
\end{definition} \begin{example}
\noindent Let $\Delta=\{a,b\}$ and $\phi$ be as in Example \ref{2dim}. One example of a two-dimensional $[1,2]$-coding $\tau:\Sigma\to\Delta$ is defined by \begin{align*}
    &\tau(0)=~\begin{matrix}a&a\end{matrix}& &\tau(1)=~\begin{matrix}b&b\end{matrix}& &\tau\left(\phi^2(0)\right)=\begin{matrix}
            a&a&b&b&b&b&a&a\\b&b&a&a&a&a&b&b\\b&b&a&a&a&a&b&b\\a&a&b&b&b&b&a&a
        \end{matrix}&
\end{align*} 
\end{example}
\noindent The concept of a two-dimensional 2-automatic sequence is defined similarly to the one-dimensional case, using a generalised version of Cobham's Theorem by Allouche and Shallit \cite[Theorem 14.2.3]{Allouche_Shallit_2003}.\begin{definition}[Cobham's Theorem for Two-Dimensional Sequences:]
    A two dimensional sequence $S$ is \textbf{automatic} if and only if there exist natural numbers $k,l,k',l'$ such that $S$ is the image (under a $[k',l']$-coding) of a fixed point of a two-dimensional uniform $[k,l]$-morphism.
\end{definition}

\subsection{The Number Wall of an Automatic Sequence}

\noindent Let $\phi:\Sigma^*\to\Sigma^*$ be a $k$-morphism on the finite alphabet $\Sigma$, and let $\sigma_0\in\Sigma$ be $\phi$-prolongable. Define $S=\tau(\phi^\omega(\sigma_0))$ for a one dimensional $d$-coding $\tau:\Sigma\to\F_q^d$, where $d\in\N$. If $k$ is even, it may be assumed without loss of generality that $d$ is also even upon replacing $\tau(\sigma)$ with its $k$-compression. \\

\noindent The image of each letter $\sigma\in\Sigma$ under $\tau$ forms a finite sequence with which is associated a finite number wall as in Figure \ref{smallrand}. Rows with negative index are included to create horizontal symmetry, resulting in the following shape: \begin{figure}[H]
\begin{center}
    \includegraphics[scale=0.5]{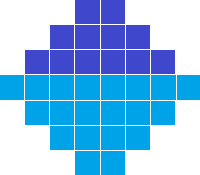}
    \caption{\label{tile_fig}The shape of a finite number wall generated by $\tau(\sigma)$ when $L(\tau(\sigma))=8$. Each block represents an entry, with the dark blue representing the rows with index $m\le -1$ and the longest row being the sequence given by $\tau(\sigma)$.}
    \vspace{-1.5em}
\end{center}
\end{figure} 
\noindent Blocks of the number wall in the shape of Figure \ref{tile_fig} are called \textbf{tiles}. Figure \ref{nw_tiles} depicts how a number wall is split into tiles. When $k$ is odd and $\tau(\sigma)$ has odd length for any tile $\sigma\in\Sigma$, the number wall is split into tiles as depicted in the right image of Figure \ref{nw_tiles}. This paper only deals with $2$-morphisms, and hence the number wall will always be split into tiles as depicted by the left image of Figure \ref{nw_tiles}. 
\begin{figure}[H]
\begin{center}
    \includegraphics[scale=0.20]{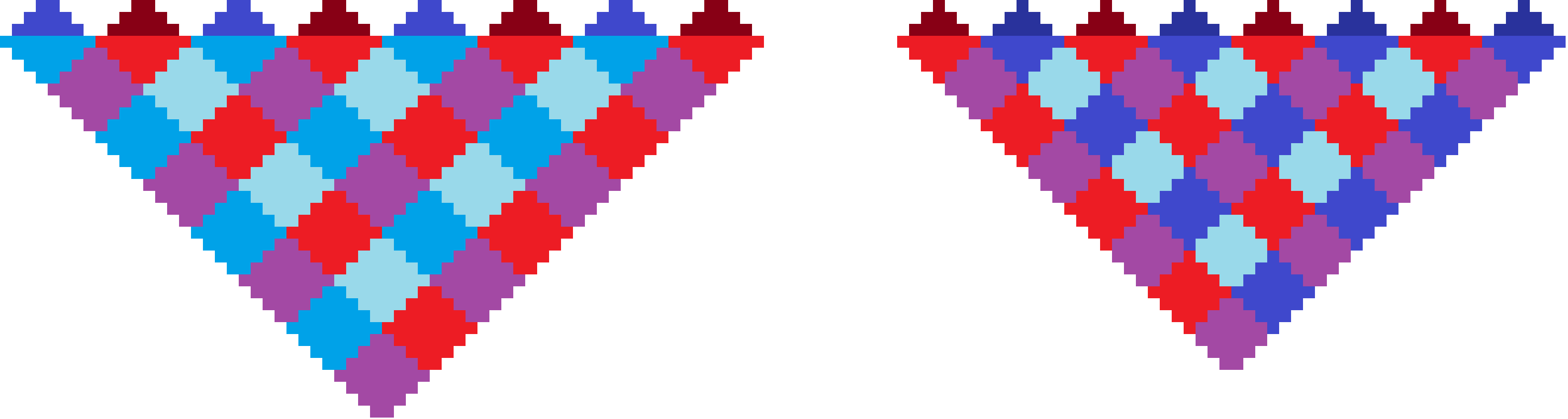}
    \caption{\label{nw_tiles}A large potion of the number wall split into tiles depending on if $\tau(\sigma)$ has odd or even length. The dark red and blue show where the row index becomes negative. The remaining colours serve no other purpose than to distinguish one tile from another.}
    \vspace{-1.5em}
\end{center}
\end{figure} 

\noindent Just as in Figure \ref{tile_fig}, the rows of negative index are included for horizontal symmetry. However, by including this seemingly redundant information, one is now able to consider the number wall as a two-dimensional sequence of tiles, $\mathcal{T}=(T_{m,n})_{m,n\in\N}$, in the following sense: \begin{figure}[H]
    \centering
    \includegraphics[width=1\linewidth]{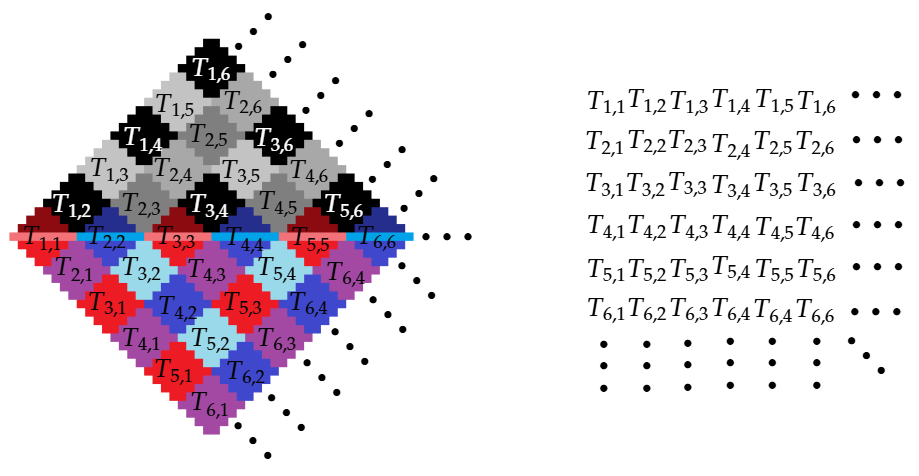}
    \caption{\textbf{Left:} The number wall of a sequence split into tiles. Any tiles that are entirely contained on rows of negative index are coloured in black and grey. The lighter row in the middle denotes the zeroth row of the number wall. The remaining colours serve no other purpose than to distinguish one tile from another. \textbf{Right:} The two dimensional sequence $\mathcal{T}$ that is identified with the number wall.  }
    \label{auto_nw}
\end{figure} 

\noindent For the purposes of this paper, the entries of number walls are in finite fields. As a consequence, there are only finitely many tiles of any given size and hence the sequence $\mathcal{T}$ is defined over a finite alphabet.\\

\noindent Theorems \ref{F5} and \ref{F7} are attained by proving that the number wall of the First and Second Level Paperfolding Sequences (over their respective finite fields) are themselves two-dimensional 2-automatic sequences, in the sense of Figure \ref{auto_nw}. Therefore, one needs to define the function used to transform the sequence $\mathcal{T}$ into the shape of a number wall. However, it is clear from their rectangular shape that $[k,l]$-codings are not sufficient for this task on their own. To remedy this, the following transformation is introduced that takes a two-dimensional infinite sequence in $\F_q^{\N\times\N}$ and returns a two-dimensional array in the shape of a number wall. \begin{definition}
    Define $\mathcal{P}:\F_q^{\N\times\N}\to\F_q^{\Z\times\N}$ as the function \[\mathcal{P}((s_{m,n})_{m,n\in\N})=(b_{m,n})_{m\in\Z, n\ge|m|}~~~ \text{where} ~~~b_{m,n}=s_{m+n,\left\lfloor\frac{n-m}{2}\right\rfloor}.\]
\noindent In other words, $\mathcal{P}$ permutes the entry in row $m$ and column $n$ of the input two-dimensional sequence to row $-n+\left\lfloor\frac{m}{2}\right\rfloor$ and column $n+\left\lceil\frac{m}{2}\right\rceil$. This is illustrated in the following example.
\end{definition}
\begin{example}
    Let $(s_{m,n})_{m,n,\in\N}$ be a two dimensional sequence. Then, \begin{equation*}
        \mathcal{P}\begin{pmatrix}
            s_{0,0}& s_{0,1} &s_{0,2} &s_{0,3} & \cdots\\
            s_{1,0}& s_{1,1} &s_{1,2} &s_{1,3} & \cdots\\
            s_{2,0}& s_{2,1} &s_{2,2} &s_{2,3} & \cdots\\
            s_{3,0}& s_{3,1} &s_{3,2} &s_{3,3} & \cdots\\
            s_{4,0}& s_{4,1} &s_{4,2} &s_{4,3} & \cdots\\
            s_{5,0}& s_{5,1} &s_{5,2} &s_{5,3} & \cdots\\
            s_{6,0}& s_{6,1} &s_{6,2} &s_{6,3} & \cdots\\
            s_{7,0}& s_{7,1} &s_{7,2} &s_{7,3} & \cdots\\
            \vdots&\vdots&\vdots&\vdots&\ddots
        \end{pmatrix}=\begin{matrix}
            &&&&\iddots&\iddots\\
            &&&s_{0,3}&s_{1,3}&\iddots&\iddots\\
            &&s_{0,2}&s_{1,2}&s_{2,3}&s_{3,3}&\iddots&\iddots\\
            &s_{0,1}&s_{1,1}&s_{2,2}&s_{3,2}&s_{4,3}&s_{5,3}&\iddots\\
            s_{0,0}&s_{1,0}&s_{2,1}&s_{3,1}&s_{4,2}&s_{5,2}&s_{6,3}&s_{7,3}&\cdots\\
            &s_{2,0}&s_{3,0}&s_{4,1}&s_{5,1}&s_{6,2}&s_{7,2}&\ddots\\
            &&s_{4,0}&s_{5,0}&s_{6,1}&s_{7,1}&\ddots&\ddots\\
            &&&s_{6,0}&s_{7,0}&\ddots&\ddots\\
            &&&&\ddots&\ddots
        \end{matrix}
    \end{equation*}
\end{example}
\noindent Hence, a number wall is considered to be an automatic sequence in the sense that it is equal to $\mathcal{P}(\tau'(\mathcal{T}))$, where $\tau'$ is a $[k',l']$-coding for some $k',l'\in\N$ and $\mathcal{T}$ is the fixed point of a two-dimensional $[k,l]$-morphism for some $k,l\in\N$. In particular, the number walls of the First-Level (Second-Level, respectively) Paperfolding sequences are shown to be equal to $\mathcal{P}(\tau'(\mathcal{T}))$ where $\tau'$ is a $[8,4]$-coding ($[16,8]$-coding, respectively) and $\mathcal{T}$ is the fixed point of a $[2,2]$-morphism. \\

\noindent In summary, number walls are considered automatic sequences in the following sense. \begin{figure}[H]
    \centering
    \includegraphics[width=1\linewidth]{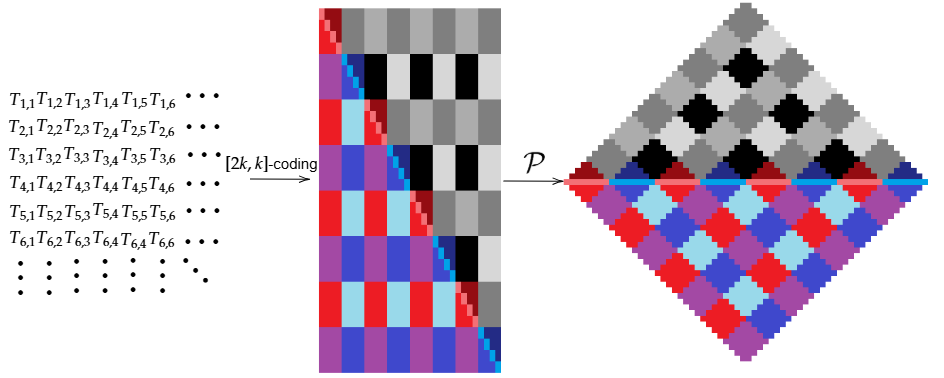}
    \caption{In this paper, a number wall is automatic if it is obtained by applying $\mathcal{P}$ to the image of a fixed point of a two-dimensional $[2,2]$-morphism under a $[2k,k]$-coding.}
    \label{fig_6}
\end{figure}
\noindent Let $k\in\N$, let $T$ be the finite set of tiles that are comprised of $2k^2$ elements over $\F_q$ and let $\Sigma$ be a finite alphabet. In practice, the $[2k,k]$-coding $\tau'$ and the transformation $\mathcal{P}$ are combined into one function $\tau=\mathcal{P}\circ\tau':\Sigma_2^*\to T_2^*$ that takes a two-dimensional sequence over a finite alphabet $\Sigma$ (as in the left-side of Figure \ref{fig_6}) and returns a two-dimensional sequence of tiles (as in the right-side of Figure \ref{fig_6}). This function is called a \textbf{number wall coding}. 
\section{Explicit Morphism for the Number Wall of the First Level Paperfolding Sequence}\label{sect:alg}
\noindent Let $S$ be the First Level Paperfolding Sequence and let $\psi$ be the 2-morphism that generates it (see Example \ref{PF1DEF}). This section details the algorithm used to prove Theorem \ref{F7}. That is, this algorithm finds an ordered finite alphabet $\Sigma=\Sigma_{S,p}$, a $[2,2]$-morphism $\Psi=\Psi_{S,p}:\Sigma_2^*\to\Sigma_2^*$, $\sigma_0\in\Sigma$ that is $\Psi$-prolongable, a finite set of tiles $T=T_{S,p}$ and a number wall coding $\tau=\tau_{S,p}:\Sigma_2^*\to T_2^*$ such that\footnote{Recall here Definition \ref{nw}.} $W_p(S)=\tau(\Psi^\omega(\sigma_0))$. The algorithm is almost identical when working with the Second Level Paperfolding Sequence to prove Theorem \ref{F5}; the minor changes will be detailed at the end of this section. The proof is split into two main steps:\begin{enumerate}
    \item An efficient algorithm uses $\psi$ and the Frame Constraints (Theorem \ref{FC}) to find the alphabet $\Sigma$, the $[2,2]$-morphism $\Psi$, the letter $\sigma_0$, the set of tiles $T$ and the number wall coding $\tau$. 
    \item The two-dimensional sequence defined by ${\tau}({\Psi^\omega}({\sigma_0}))$ is proven to be equal to $W_p(S)$ by verifying that ${\tau}({\Psi^\omega}({\sigma_0}))$ satisfies the Frame Constraints (Theorem \ref{FC}) and that the zeroth row is $S$.
\end{enumerate} 
\noindent Each step is split into two further sub-steps (\hyperref[11]{1.1}, \hyperref[12]{1.2}, \hyperref[21]{2.1}, and \hyperref[22]{2.2} below) and each sub-step is presented as its own algorithm. The steps of each algorithm are written {\fontfamily{cmtt}\selectfont in typewriter font} and any necessary justification of each step is provided below in the normal font.
\subsection*{Part 1: Finding the Parameters}
The algorithm begins with empty lists $\Sigma$ and $T$. As it progresses, letters (tiles, respectively) are added to $\Sigma$ (to $T$, respectively) and the $[2,2]$-morphism $\Psi$ and number wall coding $\tau$ are built. The finite alphabet ${\Sigma}$ will be comprised of letters $\sigma_i$, where $i$ is in a finite set $\{n\in\Z: -1\le n \le N\}$ for some $N<\infty$. Finally, given a finite word $w\in\F_q^*$, one defines the number wall of $w$ in the natural way by considering $w$ as a finite sequence of letters. This is denoted $W_q(w)$.
\subsubsection*{Algorithm 1.1: Initial Conditions}\label{11}
Let $\Gamma$, $\psi$ be as they were in Example \ref{PF1DEF} and define $\pi$ as the 8-compression of the 1-coding from the same example. From now on, $\# T$ denotes the cardinality of a finite ordered list $T$.\\

\noindent {\fontfamily{cmtt}\selectfont\textbf{Step 1.1.1:} The tile whose bottom row has index $-1$ is appended to ${T}$ and $\sigma_{-1}$ is appended to ${\Sigma}$.\\ Define $\tau(\sigma_{-1})$ as this tile. Similarly, the tile of all zeros is appended to $T$, $\sigma_0$ is appended \\to $\Sigma$ and $\tau(\sigma_0)$ is defined as this tile. These tiles are depicted in Figure \ref{zero_tile}.}
\begin{figure}[H]
    \centering
    \includegraphics[width=0.65\linewidth]{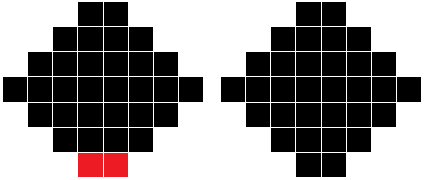}
    \caption{\label{zero_tile}\textbf{Left:} The tile whose bottom row has index $-1$. The red entries are equal to 1, and the black entries are 0. \text{Right:} The tile of all zeros, with the same colour scheme.}
\end{figure}
\noindent {\fontfamily{cmtt}\selectfont\textbf{Step 1.1.2:} Recall the finite alphabet $\Gamma$ and the 1-coding $\pi$ from Example \ref{PF1 sub rules}. For each $\gamma\in\Gamma$, \\calculate the tile given by $W_p(\pi(\gamma))$ and append it to ${T}$. Each time a tile is appended to ${T}$, \\append $\sigma_{m(T)}$ to ${\Sigma}$ where $m(T)=\#{T}-2$. Additionally, define ${\tau}(\sigma_{m(T)})=W_p(\pi(\gamma))$.}\\

\noindent Note that a new letter is appended to ${\Sigma}$ whenever a new tile is discovered. Therefore, ${\tau}:\Sigma\to T$ is a bijection and hence its inverse is well defined. \\

\noindent The image of $\sigma_i\in{\Sigma}$ under ${\Psi}$ is a $2\times 2$ matrix $\begin{matrix}\sigma_{i_w}&\sigma_{i_n}\\\sigma_{i_s}&\sigma_{i_e}\end{matrix}$. Here, each entry of the 4-tuple has been associated to a cardinal direction (east, north, west, south) in the natural way.  For any 4-tuple of tiles that are arranged as depicted in Figure \ref{2-morph}, the \textbf{scaffolding} of the southern tile is defined as the 3-tuple given by the eastern, northern and western tiles. 
\begin{figure}[H]
    \centering
    \includegraphics[width=0.75\linewidth]{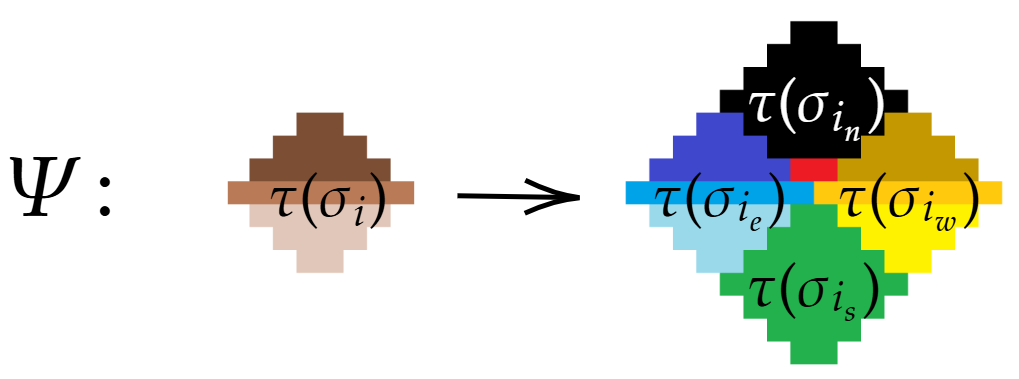}
    \caption{\label{2-morph}An illustration of $\Psi$ acting on a tile $\tau(\sigma_i)$ which contains part of the zeroth row of the number wall. The black and red tile $\tau(\sigma_{-1})$ is the same as that from Figure \ref{zero_tile}. The middle row of the brown, blue and yellow tiles is made up from the zeroth row of the number wall. The lighter (darker, respectively) portions of these tiles have positive (negative, respectively) row index. }
    
\end{figure}
\noindent For $\gamma\in\Gamma$ and for $i\in\{0,1\}$, define $\psi(\gamma)_i$ be the $i^\nth$ letter of $\psi(\gamma).$\\

\noindent {\fontfamily{cmtt}\selectfont\textbf{Step 1.1.3:} Create an empty set $Scaf$. For each $\gamma\in\Gamma$, split $W_p(\pi(\psi(\gamma)))$ into 4 tiles as de-\\picted in Figure \ref{2-morph}. The southern tile, denoted $t_s$, is appended to ${T}$ and $\sigma_{m(T)}$ is appended to \\$\Sigma$. Append the scaffolding of $t_s$ to $Scaf$, and then define $${\Psi}(\tau^{-1}(W_p(\pi(\gamma)))):=\begin{matrix}{\tau}^{-1}(W_p(\pi(\psi_{0}(\gamma))))&~\sigma_{-1}\\\sigma_{m(T)}&{\tau}^{-1}(W_p(\pi(\psi_{0}(\gamma))&\end{matrix}.$$}

\noindent {\fontfamily{cmtt}\selectfont\textbf{Step 1.1.4:} For completeness, define $\Psi(\sigma_{-1})=~\begin{matrix}\sigma_0&\sigma_0\\\sigma_{-1}&\sigma_{0}\end{matrix}$ and $\Psi(\sigma_{0})=~\begin{matrix}\sigma_0&\sigma_0\\\sigma_0&\sigma_{0}\end{matrix}$.}\\

\noindent {\fontfamily{cmtt}\selectfont\textbf{Step 1.1.5:} Return $Scaf$, ${T}$, ${\Sigma}$, ${\tau}$ and ${\Psi}$}\\

\noindent At the end of Part 1.1, one has found all the tiles that make up the zeroth row of the number wall and the functions ${\Psi}$ and ${\tau}$ have been defined on these tiles. Furthermore, four second order tiles have been found by using the Frame Constraints (Theorem \ref{FC}), and their scaffolding has been recorded.

\subsubsection*{Algorithm 1.2: Completing ${\Psi}$}\label{12}
\noindent Algorithm 1.2 completes the set of tiles $T$ and the finite ordered alphabet $\Sigma$, returning a two dimensional 2-automatic sequence.  \\

\noindent {\fontfamily{cmtt}\selectfont\textbf{Step 1.2.1:} Let $\sigma_l$ be the first letter in ${\Sigma}$ such that ${\Psi}(\sigma_l)$ is not yet defined. Let $\sigma_i,\sigma_j,\sigma_k\in{\Sigma}$ be the scaffolding of $\sigma_l$. Apply ${\Psi}$ to $\sigma_i,\sigma_j$ and $\sigma_k$ to obtain the coloured parts of Figure \ref{1.2}.}  \begin{figure}[H]
    \centering
    \includegraphics[width=0.65\linewidth]{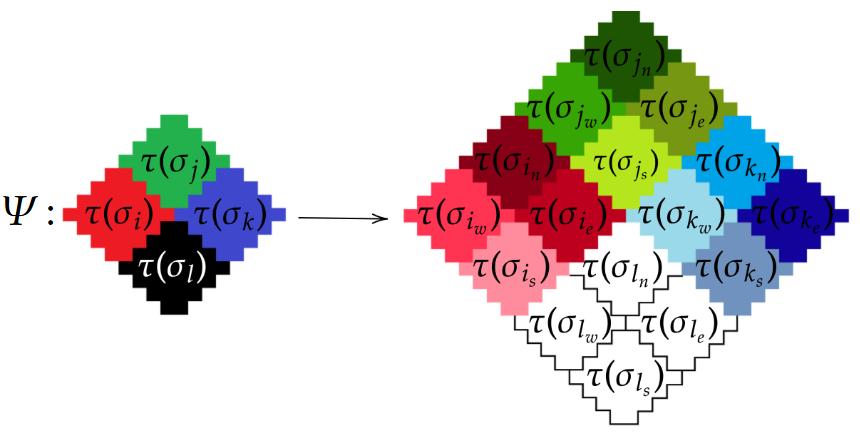}
    \caption{\label{1.2} The result of applying ${\Psi}$ to the scaffolding of $\sigma_l$.}
\end{figure}
\noindent When {\fontfamily{cmtt}\selectfont Step 1.2.1} is first run, $\Psi$ has been defined on every element of the scaffolding by Algorithm 1.1. \\

\noindent {\fontfamily{cmtt}\selectfont Step 1.2.2: Apply the Frame Constraints (Theorem \ref{FC}) to calculate the tile $t={\tau}(\sigma_{l_n})$ in \\Figure \ref{1.2}. If $t$ is not in ${T}$, append it, append $\sigma_{m(T)}$ to ${\Sigma}$ and append the scaffolding of $t$\\ to $Scaf$. Otherwise, do nothing.}\\

\noindent  There are two noteworthy comments relating to {\fontfamily{cmtt}\selectfont Step 1.2.2}. Firstly, there is no reason \textit{a priori} that the scaffolding of any $\sigma\in\Sigma$ should be unique. Algorithms 2.1 and 2.2 verify that the image of $\sigma$ under $\Psi$ does not depend on the scaffolding of $\sigma$, despite this being vital to the calculation. Secondly, if the number wall of the sequence $S$ contains windows of unbounded size, {\fontfamily{cmtt}\selectfont Step 1.2.2} will eventually fail. That is, the image of the scaffolding of $\sigma_l$ will not contain enough information to apply the Frame Constraints (Theorem \ref{FC}) when deriving the white tiles from Figure \ref{1.2}. More specifically, using notation form Figure \ref{window}, the Third Frame Constraint formula requires the values of $A_k,B_k,C_k,D_k,E_k,F_k$ and $G_k$. However, if the window is too large then some of these entries may not be contained in the image of the scaffolding of $\sigma_l$, stopping the algorithm. \\

\noindent {\fontfamily{cmtt}\selectfont\textbf{Step 1.2.3:} Define $\sigma_{l_n}:={\tau^{-1}}(t)$}. \\

\noindent{\fontfamily{cmtt}\selectfont \textbf{Step 1.2.4:} Repeat steps 1.2.2 and 1.2.3 for ${\tau}(\sigma_{l_w})$, ${\tau}(\sigma_{l_e})$ and ${\tau}(\sigma_{l_s})$ in this order.}\\

\noindent {\fontfamily{cmtt}\selectfont\textbf{Step 1.2.5:} Define ${\Psi}(\sigma_l)=\begin{matrix}\sigma_{l_w}&\sigma_{l_n}\\\sigma_{l_s}&\sigma_{l_e}\end{matrix}$.}\\

\noindent {\fontfamily{cmtt}\selectfont\textbf{Step 1.2.6:} If there remains any $\sigma\in{\Sigma}$ whose image under ${\Psi}$ is undefined, go back to Step $1.2.1$. \\Otherwise, return ${\Psi}$,${\Sigma}$, ${T}$, and ${\tau}$.}\\

\noindent In {\fontfamily{cmtt}\selectfont Step 1.2.4}, the tiles were processed in the order north, west, east and south. This ensures that $\Psi$ is always defined on the scaffolding of $\sigma_l$ by the time it is processed in {\fontfamily{cmtt}\selectfont Step 1.2.1}, as these letters were discovered earlier than $\sigma_l$. \\ 

\noindent Algorithm 1.2 terminates when $\Psi(\sigma)$ has been defined for every $\sigma\in\Sigma$. This algorithm will eventually terminate, either because there are only finitely many possible tiles with entries in $\F_q$ or because the windows have grown too large and {\fontfamily{cmtt}\selectfont Step 1.2.2} fails. Assuming the algorithm has finished without failure, what is returned is a two dimensional automatic sequence given by $\Psi^{\omega}(\sigma_1)$ and a number wall coding $\tau$. Algorithms 2.1 and 2.2 verify that $\tau(\Psi^{\omega}(\sigma_1))=W_p(S)$. 
\subsection*{Part 2: Verifying the $[2,2]$-morphism}

\noindent The $[2,2]$-morphism ${\Psi}$, the finite alphabet ${\Sigma}$, the tile set ${T}$ and the number wall coding ${\tau}$ generate an automatic sequence. The next two algorithms serve to verify that this sequence is the number wall of $S$ modulo the chosen prime $p$. Indeed, for any $k$-automatic sequence $S'$, it is expected that $W_p(S')$ is a two-dimensional $k$-automatic sequence in the sense of Figure \ref{auto_nw} (see Conjecture \ref{autonw}). If this is the case, and the size of tiles was chosen correctly, then Algorithm 1 can only return the correct $[2,2]$-morphism and finite alphabet that generates $W_p(S)$. However, without knowing if Conjecture \ref{autonw} is true, one must verify that $W_p(S)=\Psi^\omega(\sigma_1)$ manually. \\

\noindent To achieve this, Algorithm 2.1 finds every possible combination of 4 tiles that can form a 4-tuple. That is, every combination of 4 tiles that can appear in the formation depicted in the left of Figure \ref{1.2} or the right of Figure \ref{2-morph}. Note, a 4-tuple does not have to be equal to $\Psi(\sigma)$ for some $\sigma\in\Sigma$ (See Figure \ref{4-tup}). Algorithm 2.2 then verifies that each 4-tuple satisfies the Frame Constraints.

\subsubsection*{Algorithm 2.1: Finding all possible 4-tuples}\label{21}
\noindent {\fontfamily{cmtt}\selectfont\textbf{Step 2.1.1:} Define an empty ordered list $Tup$ and for every $\sigma\in{\Sigma}$ append ${\Psi}(\sigma)$. }\\

\noindent The ordered list $Tup$ will contain every $4$-tuple that appears in $\Psi^\omega(\sigma_1)$. Every such 4-tuple is contained in the image of some other 4-tuple under ${\Psi}$.\begin{figure}[H]
    \centering
    \includegraphics[scale=0.25]{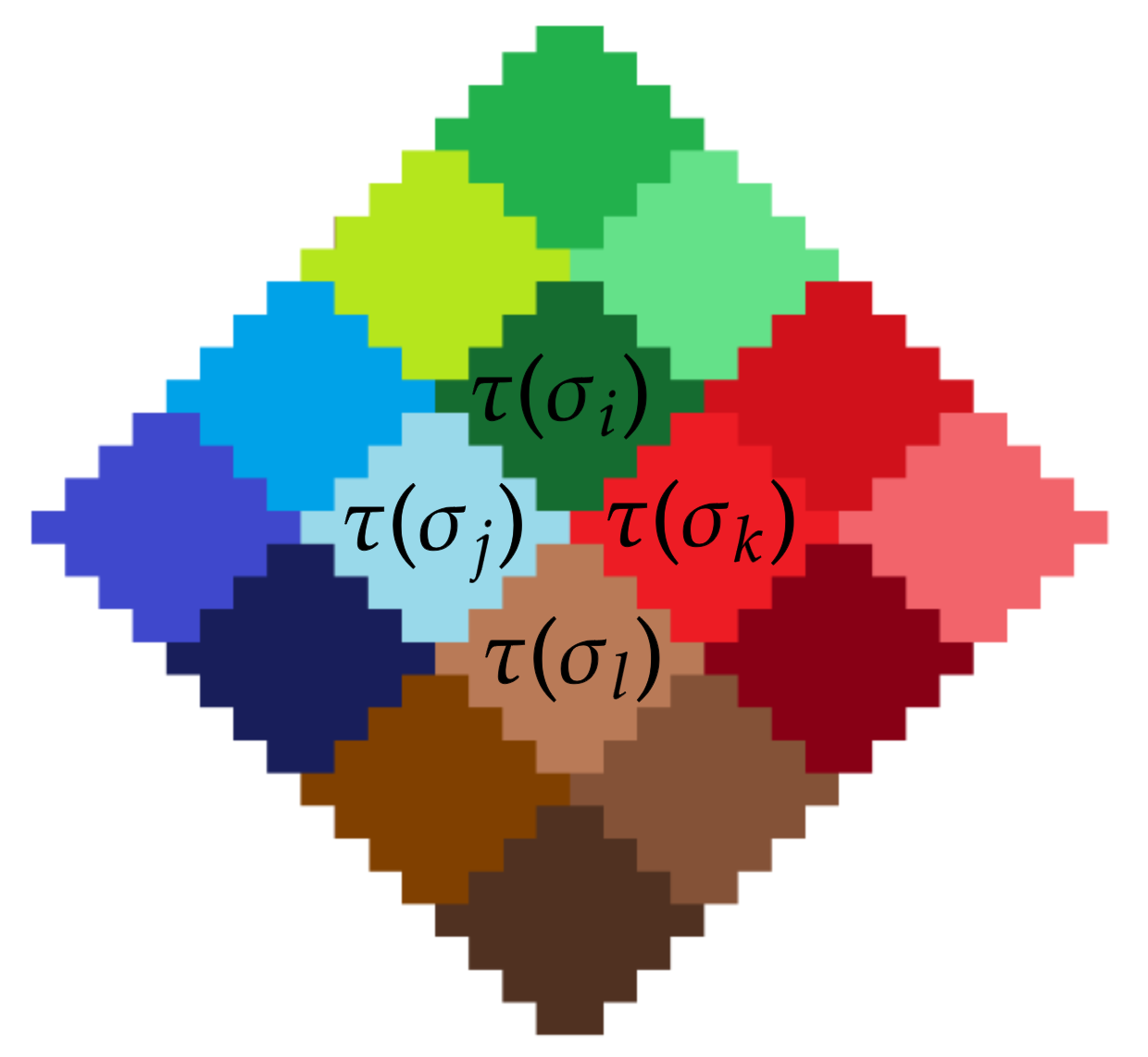}
    \caption{Each block of 4 tiles in a similar colour represent ${\tau}({\Psi}(\sigma))$ for some $\sigma\in{\Sigma}$. The 4-tuple of tiles $({\tau}(\sigma_i), {\tau}(\sigma_j), {\tau}(\sigma_k),{\tau}(\sigma_l))$ is contained within the image of these four $\sigma$ under ${\Psi}$.}
    \label{4-tup}
\end{figure}
\noindent {\fontfamily{cmtt}\selectfont\textbf{Step 2.1.2:} Let $\varsigma=\{\sigma_{i_j}: 1\le j \le 4\}$ be the first 4-tuple in $Tup$ whose image has not yet been processed. Calculate the 16-tuple ${\Psi}(\varsigma)$}.\\

\noindent {\fontfamily{cmtt}\selectfont\textbf{Step 2.1.3:}  Check every possible 4-tuple contained within ${\Psi}(\varsigma)$. If it is not in $Tup$, append it to the end.}\\

\noindent Note, there are nine such 4-tuples in total, but only five need to be checked because the four of them are images of some $\sigma\in{\Sigma}$ under ${\Psi}$ and hence are already in $Tup$.\\

\noindent {\fontfamily{cmtt}\selectfont\textbf{Step 2.1.4: }If the end of $Tup$ has been reached, return $Tup$. Otherwise, go back to step 2.1.1}\\

\noindent When the end of the list is reached, this implies that set $Tup$ is closed under $\Psi$. That is, all the 4-tuples found when applying $\Psi$ to a 4-tuple in $Tup$ are already in $Tup$. In turn, this implies $Tup$ is a complete list. 

\subsubsection*{Algorithm 2.2: Verifying the 4-tuples}\label{22}
\noindent The final step is to verify the 4-tuples satisfy the Frame Constraints (Theorem \ref{FC}). If they do, then this implies that $\tau(\Psi^{\omega}(\sigma_1))=W_p(S)$, since the two-dimensional automatic sequence of tiles satisfies the Frame Constraints (Theorem \ref{FC}) and the entries within each tile also satisfies the frame constraints by construction (Algorithm 1.2). Therefore, the two-dimensional automatic sequence $\tau(\Psi^\omega(\sigma_1))$ is the number wall of \textit{some} sequence. By construction from Algorithm 1.1, this sequence is the First-Level Paperfolding Sequence. \\

\noindent {\fontfamily{cmtt}\selectfont\textbf{Step 2.2.1:} Let $\varsigma$ be the first 4-tuple in $Tup$ that has not yet been processed by \\Algorithm 2.2. Apply ${\tau}$ to all entries of $\varsigma$ other than the southern one, denoted $\varsigma_s$.}\\

\noindent {\fontfamily{cmtt}\selectfont\textbf{Step 2.2.2:} Let $\widetilde{\tau(\varsigma_s)}$ be the tile calculated by applying the Frame Constraints (Theorem \ref{FC}) to \\the image of the scaffolding of $\varsigma_s$. If this is equal to ${\tau(\varsigma_s)}$, move to Step 2.2.3. Otherwise, \\return Failure}\\

\noindent {\fontfamily{cmtt}\selectfont\textbf{Step 2.2.3:} If $\varsigma$ was the final 4-tuple in $Tup$, return $Tup$. Otherwise, go back to step $2.2.1$.}\\

\noindent If Algorithm 2.2 is successful, then ${W_p(S)}$ is the number wall modulo $p$ of the sequence on its zeroth row. By construction, this sequence is the First Level Paperfolding sequence, and this completes the proof of automaticity. Furthermore, if the windows had unbounded size then Step 1.2.2 would have failed, hence the First Level Paperfolding sequence provides a counterexample to the $t$-$LC$ over $\F_p$. Finally, applying Theorem \ref{my_result} concludes the proof of Theorem \ref{F7}. \\

\noindent The algorithm presented in this section remains virtually unchanged when applying it to the Second Level Paperfolding Sequence. The only differences in these cases is the $2$-morphism applied in Algorithm 1.1 and the size of the tiles, which need to be 16 entries long in their largest row. The $2$-morphism that generates the Second Level Paperfolding Sequence is given by Example \ref{PF2} below. The proof is omitted. Indeed, a sceptical reader can consider the 2-morphism given in Example \ref{PF2} below to be the definition of the Second Level Paperfolding Sequence since the algorithms in Section \ref{sect:alg} are implemented without the use of equation (\ref{npf}).

\begin{example}\label{PF2}Let $\Lambda=\{A,B,C,D,E,F,G,H,I,J,K,L\}$ be an alphabet and define the 2-morphism $\chi:\Lambda\to\Lambda^*$ as \begin{align*}
   \label{PF1 sub rules}  &\chi(A)=AB& &\chi(B)=CD& &\chi(C)=EF& &\chi(D)=GD& &\chi(E)=AH& &\chi(F)=CI&\\
   &\chi(G)=EJ& &\chi(H)=CK& &\chi(I)=GI& &\chi(J)=CL& &\chi(K)=GK& &\chi(L)=GL.&
\end{align*}Let $\Delta=\{0,1\}$. Then the Second Level Paperfolding Sequence can be constructed as $\upsilon(\chi^\omega(A))$, where $\upsilon:\Lambda\to\Delta$ is the 1-coding defined by\begin{align*}
    &\upsilon(A)=0& &\upsilon(B)=0& &\upsilon(C)=1& &\upsilon(D)=0& &\upsilon(E)=2& &\upsilon(F)=1&\\
    &\upsilon(G)=3& &\upsilon(H)=2& &\upsilon(I)=1& &\upsilon(J)=3& &\upsilon(K)=2& &\upsilon(L)=3.& 
\end{align*}
\end{example}
\subsection{Results and Data}
\noindent This section details the outcome of the algorithms \hyperref[11]{1.1}, \hyperref[12]{1.2}, \hyperref[21]{2.1}, and \hyperref[22]{2.2}. For more information on how the algorithms are implemented, see Appendix \hyperref[sect:imp]{A}. To see the implementation itself, see \cite{code}. The code was run on the CSF3 super computer at the University of Manchester. See \cite{super} for the details of this instillation. \begin{table}[H]
    \centering
    \begin{tabular}{|c|c|c|c|c|}
        \hline
        Sequence&Field & Number of Tiles & Number of 4-Tuples   \\
        \hline
        $\left(\Pi^{(1)}_n\right)_{n\ge0}$&$\F_3$ & 390 & 1366  \\
        \hline
        $\left(\Pi^{(1)}_n\right)_{n\ge0}$&$\F_7$&1778011 & 17221408 \\
        \hline
        $\left(\Pi^{(1)}_n\right)_{n\ge0}$&$\F_{11}$ &70360006 & 864510531 \\
        \hline
        $\left(\Pi^{(2)}_n\right)_{n\ge0}$&$\F_5$ & 65349573& 510595180\\\hline
    \end{tabular}
\end{table}

\section{Open Problems and Conjectures} \label{sect:end}

\noindent Algorithms \hyperref[11]{1.1} and \hyperref[12]{1.2} have been run for the First (Second, respectively) Level Paperfolding sequence over $\F_{19}$ (over $\F_{13}$, respectively). Whilst they did not return errors, it did not seem like they would finish within a reasonable amount of time. However, both found over $10^9$ tiles without returning an error, giving evidence to Conjecture \ref{main_conj}. \\

\noindent The $p(t)$-$LC$ in fields of even characteristic is more of a mystery; it has been shown by brute force computation that every sequence of length 56 over $\F_2$ has a window of size greater than or equal to 3. This was too computationally exhausting to complete for larger window sizes, but it provides evidence that $t$-LC may be true over $\F_2$. One might be tempted to believe that counterexamples to $p(t)$-$LC$ exist for larger fields of even characteristic, since there is more freedom when constructing sequences than there is over $\F_3$. However, there is currently no substantial evidence for the validity of $p(t)$-$LC$ in this case. \\

\noindent Whilst developing earlier versions of the algorithms in Section 4, explicit $2$-morphisms were found that appear to generate the number wall of the Thue-Morse sequence over $\F_2$ and $\F_3$ and also the number wall of the Paperfolding sequence over $\F_2$. Similarly, explicit 3-morphisms were found that seemed to generate the number wall of the 3-automatic Cantor\footnote{Sequence A088917 on the Online Encyclopedia of Integer Sequences.} sequence over $\F_3$ and $\F_5$. However, these morphisms have not been validated as the windows in these number walls have unbounded size. The authors note that it is possible to generalise the algorithms of Section 4 to work with number walls with unbounded window size, but to do so was outside the scope of this project.  Nevertheless, there is significant empirical evidence for the following conjecture:\begin{conjecture}\label{autonw}
    The number wall of a $k$-automatic sequence over $\F_q$ is itself a two dimensional $k$-automatic sequence.
\end{conjecture}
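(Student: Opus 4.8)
The plan is to prove this by combining Cobham's Theorem (one-dimensional version) with the Frame Constraints (Theorem \ref{FC}), essentially making rigorous the heuristic that underpins Algorithms 1.1--2.2. Suppose $S = \tau_0(\phi^\omega(\sigma_0))$ is $k$-automatic, where $\phi : \Sigma^* \to \Sigma^*$ is a uniform $k$-morphism with $\phi$-prolongable letter $\sigma_0$ and $\tau_0 : \Sigma \to \F_q^d$ is a $d$-coding (by passing to the $k$-compression we may assume $d$ is a suitable multiple of $k$, and in particular even when $k$ is even). First I would fix the tile shape of Figure \ref{tile_fig} with largest row of length $d$, so that the tiles partition the number wall as in Figure \ref{nw_tiles}. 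The goal is to produce a finite alphabet $\mathcal{A}$ of tiles, a $[k,k]$-morphism $\Psi$ on $\mathcal{A}$ (or a general $[k,l]$-morphism — here $l$ is dictated by how the tile rows stack), a prolongable tile, and a number wall coding, such that applying $\mathcal{P}$ to the image of the fixed point reproduces $W_q(S)$.

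The key structural input is that the entry $W_{q,m,n}(S)$ is determined \emph{locally}: by the Frame Constraints, each row of the number wall is obtained from the two rows above it by a rule whose "reach" is governed by the sizes of the windows it passes through. The crucial step is therefore a \textbf{bounded window lemma}: I would show that, for a $k$-automatic sequence over a finite field, the window sizes appearing in $W_q(S)$ are bounded. The point is that the set of all "local patterns" of the automatic sequence $S$ — finite subwords, indexed by their position class modulo powers of $k$ — is finite, and a sufficiently long run forcing an unboundedly large window would, via the pigeonhole principle applied to the $k$-kernel of $S$, force $S$ itself to contain a "self-similar" block structure incompatible with finiteness of the kernel; alternatively one shows directly that an infinite window would make the associated Hankel-type determinants vanish on an arithmetic progression in a way the automaton cannot sustain. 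Granting bounded windows, the Frame Constraints become a \emph{finite-range} cellular rule on tiles: knowing the scaffolding (eastern, northern, western neighbours, in the sense of Figure \ref{2-morph}) of a tile determines it. One then argues that the map sending a tile of $\phi^i(\sigma_0)$-origin to its image under one more application of $\phi$ is well-defined on tiles (independent of the scaffolding — exactly what Algorithms 2.1--2.2 verify in each instance), yielding the desired $[k,l]$-morphism $\Psi$; the coding $\tau$ records the literal $\F_q$-entries of each tile. Finiteness of $\mathcal{A}$ follows because each tile has boundedly many entries in the finite field $\F_q$.

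The main obstacle, by a wide margin, is the \textbf{bounded window lemma}. Indeed the bulk of the paper — Theorems \ref{F5} and \ref{F7} — amounts to establishing exactly this bound (via Lemma \ref{growcor}) for the specific Paperfolding sequences, and the authors explicitly note that their algorithms \emph{fail} precisely when windows grow unboundedly (as with Thue--Morse and Cantor over small fields), where the number wall is nonetheless presumably automatic in a looser sense. So the honest statement is that Conjecture \ref{autonw}, as phrased, likely requires a more robust notion — an "automatic sequence with boundedly many local configurations up to the window-filling rule" — and the reason it is stated as a conjecture rather than a theorem is that no uniform argument for finiteness of the tile alphabet (equivalently, for a structural dichotomy "either windows are bounded, or the number wall is still automatic via an enlarged tile shape") is currently known. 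A realistic partial result I would aim for: \emph{if} $W_q(S)$ has bounded window size, \emph{then} it is two-dimensional $k$-automatic, which reduces the full conjecture to the window-boundedness question and makes precise why the theorem-level results of this paper are conditional on heavy computation rather than a general principle.
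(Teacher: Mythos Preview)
The paper does not prove this statement: Conjecture~\ref{autonw} is explicitly listed among the open problems in Section~\ref{sect:end}, supported only by empirical evidence and by the Allouche--Peyri\`ere--Wen--Wen result for Thue--Morse over~$\F_2$. There is therefore no proof in the paper to compare your proposal against.

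That said, your sketch has a genuine gap beyond the one you already flag. Your ``bounded window lemma'' is not merely unproven but \emph{false} as stated: the paper itself records that the number walls of the Thue--Morse sequence over $\F_2$ and $\F_3$, the Paperfolding sequence over $\F_2$, and the Cantor sequence over $\F_3$ and $\F_5$ all have windows of unbounded size, yet in at least the first of these cases the number wall is known to be automatic. So bounded windows cannot be a consequence of $k$-automaticity, and the pigeonhole/kernel argument you gesture at cannot succeed. You acknowledge this and retreat to the conditional statement ``bounded windows $\Rightarrow$ automatic'', but even this weaker claim is not established by your outline. The step ``the map sending a tile to its image under one more application of $\phi$ is well-defined on tiles (independent of the scaffolding)'' is exactly the content of Algorithms~2.1--2.2, which the paper must run \emph{separately for each sequence and each prime} precisely because no general argument is available. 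Boundedness of the windows makes the Frame Constraints finite-range, so it guarantees that a tile is determined by its scaffolding; it does \emph{not} guarantee that two occurrences of the same tile with different scaffoldings have the same image under $\Psi$. Without that, $\Psi$ is not a morphism on the tile alphabet at all, and enlarging the alphabet to include scaffolding data just pushes the problem one level up. Your proposal is a reasonable research programme, but as a proof it is missing the central mechanism.
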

\noindent More concrete evidence towards Conjecture \ref{autonw} was found by Allouche, Peyriere, Wen and Wen \cite{TM_auto} in 1998 when they proved via careful study of Hankel matrices that the number wall\footnote{The language of number walls is not used in this paper. However, an illustration of the number wall of the Thue Morse sequence can be found in Figure 1 of \cite{TM_auto}.} of the Thue-Morse sequence over $\F_2$ is automatic.\\ 

\noindent Proving Conjecture \ref{autonw} is a key step towards proving Conjecture \ref{main_conj}. 

\bibliographystyle{alphaurl}
\bibliography{refs}
\newpage
\section*{Appendix A: Implementation}\label{sect:imp}
\noindent The purpose of this section is to aid the reader in the understanding of the code used to prove Theorems \ref{F5} and \ref{F7}. It details how the algorithms in Section \ref{sect:alg} are implemented as code, and should be read in combination with Section \ref{sect:alg} and the codebase \cite{code}.
\subsection*{Part 0: Prerequisites}
Before discussing the details of how the implementation completes the algorithms detailed in Section \hyperref[sect:alg]{4}, one must understand the additional supporting structures that have been constructed to improve efficiency. A single tile has multiple attributes that are unique to it, all of which are required to form the notion of a `tile' in the implementations code. To bind these attributes together into a single variable, the \texttt{Tile class} has been created to allow \texttt{Tile objects} to be instantiated in the implementation, with the variables and functions outlined below. For a given $\sigma_i\in\Sigma$, a Tile object contains variables for each of the following: \begin{itemize}
    \item \texttt{id} - a unique integer allowing efficient Tile identification after the Tile has been constructed. This value is defined to be the number of Tiles that have previously been found. For example, $i+1$ is the \texttt{id} for $\sigma_i$. This is because the tiles with \texttt{id} 0 and 1 are $\tau(\sigma_{-1})$ and $\tau(\sigma_{0})$, respectively. 
\item \texttt{value} - the list of numbers that make up $\tau(\sigma_i)$, as shown in Figure \ref{zero_tile}. This variable is a 2-dimensional list of integers, where the length of the outermost list is dependent on the length of the tile (see \texttt{tile\textunderscore length} below).
\item \texttt{*\textunderscore image} - an object reference to each of the four image Tiles that make up $\Psi(\sigma_i)$, as demonstrated in Figure \ref{2-morph}. These are named \texttt{west\textunderscore image}, \texttt{north\textunderscore image}, \texttt{east\textunderscore image}, and \texttt{south\textunderscore image} in the class's code. When a Tile is first instantiated, these image Tiles have not yet been calculated and hence are denoted by $*$. Once all of the images of a Tile have been computed, the specified Tile can be considered complete for the purposes of the Tile generation process.
\item \texttt{scaffolding} - a list of Tile objects for the three tiles that sit directly above the specified Tile $T$ within the number wall, allowing computation of the \texttt{value} in $T$. This takes the place of the set $Scaf$ from Section \ref{sect:alg}.
\end{itemize}

\noindent The Tile Class also contains two static variables that are required by almost every function in the code. Hence, it is helpful to define them globally in the Tile class rather than pass them to each function. \begin{itemize}
\item \texttt{tile\textunderscore length} - this is determined by the length of the longest row in the tile. 
\item \texttt{tile\textunderscore prime} - the cardinality of the finite field that the number wall is being generated over.
\end{itemize}

\noindent The Tile class allows the implementation to vastly improve efficiency by removing the need for data to be duplicated across multiple different data structures. Without it, the implementation would need to split the various data within Tile across individual data structures, which would require additional computation to search through and synchronise.

\subsection*{Part 1: Computing the Parameters}
The remainder of the implementation approximately matches the steps described in \hyperref[11]{Part 1} and \hyperref[21]{Part 2} of the algorithm description in Section \hyperref[sect:alg]{4}. The implementation has been written to be sequence and tile length agnostic, but for the purpose of this paper it should be considered to be computed using the First Level Paperfolding Sequence with a tile length of 8.
\subsubsection*{Algorithm 1.1: Initial Conditions}
\noindent Before describing Part 1 of the implementation, several more critical data structures are defined: \begin{itemize}
\item \texttt{tiles} - a hash map (dictionary data structure in the Python implementation) of every unique Tile object computed so far, including Tiles that are yet to have their image Tiles computed. Hash maps store data using a \textit{key:value} system, where providing a given \textit{key} retrieves the related \textit{value} with an average time complexity\footnote{The worst-case scenario for accessing data in a hash map occurs when the hashing algorithm maps every key to the same location, which gives $O(n)$ time complexity for accessing data, but would require an extremely inefficient hashing algorithm for this to occur.} of $O(1)$. This allows the implementation to work efficiently when checking whether a newly generated Tile value is unique or not - as the \texttt{tiles} dictionary uses the Tile value for the \textit{key}, and a reference to the Tile itself as the \textit{value}. This takes the place of the ordered list $T$ in Section \ref{sect:alg}.
\item \texttt{new\textunderscore tiles} - a list containing every Tile object that has yet to have its image Tiles computed (henceforth referred to as \textit{Incomplete Tiles}). This forms a backlog of Tiles that require computation, which increases in length whenever a new unique Tile is computed.
\item \texttt{tiles\textunderscore by\textunderscore index} - an ordered list storing Tile objects by ascending \texttt{id} values. This list is used exclusively during Step 1.1 to aid the computation of Tiles along the zeroth row.
\end{itemize}

\noindent After declaring these data structures, the \texttt{input\textunderscore generator} function instantiates the \texttt{Zeroth Tile} and the \texttt{First Tile} as shown in Figure \ref{zero_tile}. For simplicity, these Tiles are hard-coded in the function, as they will always be required no matter the sequence or tile length.\\

The function then executes the instructions detailed in {\fontfamily{cmtt}\selectfont\textbf{Step 1.1.2}} and {\fontfamily{cmtt}\selectfont\textbf{Step 1.1.3}}. If any of the image Tiles generated in these steps are new Tiles then they are inserted into \texttt{tiles} and \texttt{new\textunderscore tiles}.

\subsubsection*{Algorithm 1.2: Generating the Complete Set of Tiles}\label{gen_tiles}
The core tiling generation is handled by the \texttt{tile\textunderscore computation} function, which takes the \texttt{tiles} and \texttt{new\textunderscore tiles} data structures as input. At this point, these data structures contain the Tiles generated using the sequence and the image of these Tiles under the $[2,2]$-morphism $\Psi$. Combined with the Frame Constraints (Theorem \ref{FC}), this is enough data to compute every other unique Tile in the specified number wall following the steps outlined in Algorithm \hyperref[12]{1.2}.\\

\noindent The \texttt{tile\textunderscore computation} function works by processing the incomplete Tiles within \texttt{new\textunderscore tiles} in \textit{First-In-First-Out} (FIFO) order. By following FIFO ordering, it is guaranteed that for each Tile in the scaffolding of the incomplete Tile, all of the required image Tiles will have already been computed and recorded - without this, the program would break as soon as an incomplete scaffold Tile was used.\\

\noindent For each incomplete Tile $T$ within \texttt{new\textunderscore tiles}, the three scaffolding Tiles of $T$ are collated. Then for each scaffolding Tile $S$, the four image Tiles of $S$ are stitched together in their related positions (in correlation to the number wall), where the resultant nested list forms a grid of number wall values the shape of the coloured area in the upper-right section of Figure \ref{tile_computation}. This gives a large enough portion of the number wall to apply the Frame Constraints to generate the \texttt{value} variables for $T$'s image Tiles, allowing those Tiles to be instantiated. This process is referred to as the \textbf{Scaffolding-Tile Generation} technique.

\begin{figure}[H]
    \centering
    \includegraphics[width=0.5\linewidth]{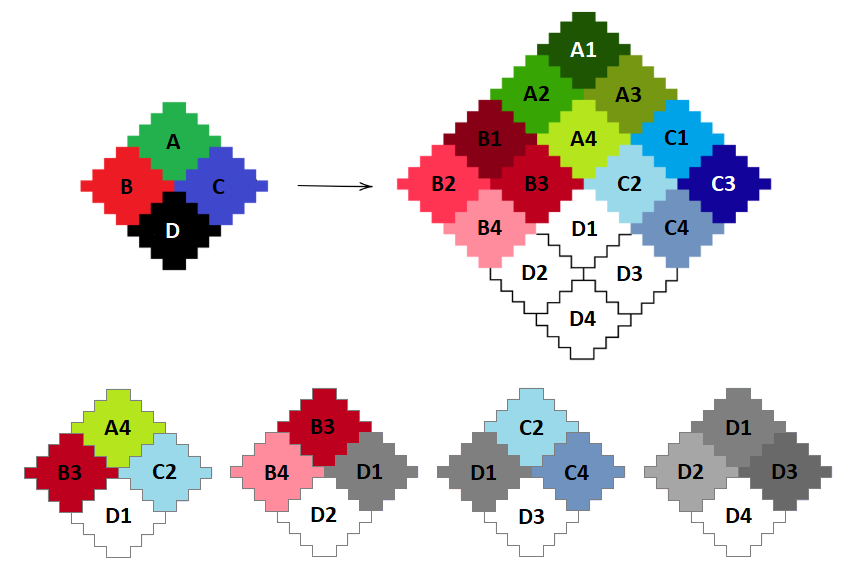}
    \caption{\label{tile_computation} {Upper -} The results of applying the Scaffolding-Tile Generation technique on incomplete Tile D, with scaffolding Tiles A, B, and C. {Lower -} The Tiles referenced within the \texttt{scaffolding} variable of each resultant image Tile from D.}
\end{figure}

\noindent Once the image Tiles of incomplete Tile $T$ have had their \texttt{value} variables computed, the function performs the following steps for each image:\begin{enumerate}
  \item Check whether the tile \texttt{value} is unseen in the execution of the implementation so far. This is completed efficiently by leveraging the search capabilities of the \texttt{tiles} dictionary: \begin{itemize}
    \item To search a dictionary, a \textit{key} is provided, which the code will then complete a \textit{hashing function} on. The result of the hashing function on the key provides a memory address that is unique to that specific key, allowing the \textit{value} associated with that key to be retrieved.
    \item The \texttt{input\textunderscore generator} function utilises this process to check Tile uniqueness by using the image Tile's computed \texttt{value} as the key to \texttt{tiles}. If data is returned from this computation then the Tile has already been found in an earlier computation, whereas if a null value is returned then this memory address has not been written to before, implying this must be a new unique Tile.
  \end{itemize}
  \item If the image \texttt{value} is unseen, instantiate the image into a Tile object, assigning the related Tiles to \texttt{scaffolding} as shown in the lower section of Figure \ref{tile_computation}. Then append the Tile into \texttt{tiles} and \texttt{new\textunderscore tiles}.
\end{enumerate}

\noindent After the {Scaffolding-Tile Generation} has executed, the incomplete Tile $T$ has each of its image Tiles assigned to it, and can now be considered complete\footnote{To maximise memory efficiency, Tile $T$ is removed from the \texttt{new\textunderscore tiles} list upon its retrieval, before any processing of the Tile has been performed.}. In cases where one of the image Tiles has been seen previously, a reference to the Tile object is retrieved from \texttt{tiles} for the purpose of assigning the image to $T$.\\

\noindent The \texttt{input\textunderscore generator} function will have finished processing the image Tiles of every unique Tile once the \texttt{new\textunderscore tiles} list is empty, at which point every unique Tile specific to this sequence, Tile length, and finite field has been found, along with its image under $\Psi$. The function then returns the \texttt{tiles} dictionary.

\subsection*{Part 2: Verifying the $[2,2]$-morphism}
Once Part 1 has finished computation, the \texttt{tiles} dictionary contains a complete set of unique Tiles and their images under $\Psi$. However, it must now be verified that the two dimensional automatic sequence provided by the successful execution of the \texttt{tile\textunderscore computation} is equal to the number wall. 
\subsubsection*{Algorithm 2.1: Generating all possible 4-tuples}
Before identifying any currently unrepresented 4-tuples in the data, the \texttt{generate\textunderscore four\textunderscore tuples} function needs to convert the information in the \texttt{tiles} dictionary into a different representation focused around 4-tuples\footnote{Note that in the implementation 4-tuple does not use a custom Class, it is simply a \textit{tuple} containing the \texttt{id} of the four Tiles that it represents.}. To achieve this, two new data structures are defined:
\begin{itemize}
    \item \texttt{tuples\textunderscore by\textunderscore index} - a list of each currently known 4-tuple, where the existing 4-tuples are generated from the four image Tiles of each unique Tile in the \texttt{tiles} dictionary.
    \item \texttt{tuples} - a dictionary of every unique 4-tuple computed so far, where both the \textit{key} and \textit{value} of the entries are the 4-tuples themselves. This dictionary is immediately populated with the existing known 4-tuples from \texttt{tuples\textunderscore by\textunderscore index}. The sole purpose of this data structure is to allow the uniqueness of each newly generated 4-tuple to be verified efficiently.
\end{itemize}

\noindent Once these data structures have been instantiated, the \texttt{generate\textunderscore four\textunderscore tuples} function iterates through every 4-tuple in \texttt{tuples\textunderscore by\textunderscore index} and converts all of the images of each Tile in the 4-tuple into a singular grid. This essentially forms a 16-tuple (or \textit{image tuple}), and mirrors the diagram shown in the upper row of Figure \ref{tile_computation}. However, for simplicity in the implementation, the diamond shape of the image tuple is rotated onto its side, allowing a square nested list to be used to represent the 2-dimensional space. Similarly, to save computation, this function does not handle the raw Tile \texttt{values}. Instead each tile is represented by its Tile \texttt{id} during each step of the computation.\\

\noindent From the image tuple, 5 new, potentially unseen 4-tuples are sampled. These are generated from the confluence points between Tile images from the original 4-tuple, i.e. where unrelated image 4-tuples meet. Using the top diagram in Figure \ref{tile_computation} as an example, the following new 4-tuples would be sampled:
\begin{itemize}
    \item \textbf{Upper tuple} - containing the Tiles B1, B3, A2, A4.
    \item \textbf{Right tuple} - containing the Tiles A3, A4, C1, C2.
    \item \textbf{Left tuple} - containing the Tiles B3, B4, D1, D2.
    \item \textbf{Lower tuple} - containing the Tiles D1, D3, C2, C4.
    \item \textbf{Middle tuple} - containing the Tiles A4, B3, C2, D1.
\end{itemize}

\noindent Each of these new 4-tuples is then checked for uniqueness, using the same hash map technique described in \hyperref[gen_tiles]{Algorithm 1.2}. If any of the new 4-tuples are not currently contained in \texttt{tuples} (and are thus previously unseen), they are added to both \texttt{tuples} and \texttt{tuples\textunderscore by\textunderscore index}. Once every 4-tuple in \texttt{tuples\textunderscore by\textunderscore index} has been processed, all the unique 4-tuples that were previously unrepresented in the \texttt{tiles} data structure will have been identified. The \texttt{generate\textunderscore four\textunderscore tuples} function then returns the \texttt{tuples\textunderscore by\textunderscore index} list.

\subsubsection*{Algorithm 2.2: Verifying the 4-tuples}
The final stage of the implementation is to verify that all of the identified 4-tuples conform to the \hyperref[FC]{Frame Constraints}. The \texttt{verify\textunderscore four\textunderscore tuples} function takes the \texttt{tuples\textunderscore by\textunderscore index} list and \texttt{tiles} dictionary as inputs, and instantiates the following data structure to aid in the verification process:
\begin{itemize}
    \item \texttt{tiles\textunderscore by\textunderscore index} - a pre-populated list of each Tile identified in Algorithm 1.2 (copied from \texttt{tiles}), where each item in the list is a reference to a Tile object, ordered by ascending Tile \texttt{id}. The list is used to assist in populating 4-tuples with Tile \texttt{value}s, using the Tile \texttt{id}s already contained in each 4-tuple to retrieve the related Tiles.
\end{itemize}

\noindent The core of the function is completed by iterating through each 4-tuple in \texttt{tuples\textunderscore by\textunderscore index}, skipping the \texttt{Zeroth Tile} and \texttt{First Tile} (which cannot be generated using the Frame Constraints). For each 4-tuple that is processed, the function generates a square 2-dimensional list of junk data with a length twice as large as \texttt{tile\textunderscore length}. This is generated to serve as a blank slate for inserting Tile \texttt{value}s into, and is square to reduce the complexity of the list indexing required to construct each Tile's \texttt{value} in the correct location. Following this, the \texttt{value} for the upper, left, and right Tile from the 4-tuple are inserted into the square - at which point the 4-tuple is ready to be verified. \\

\noindent To assess the correctness of the prepared 4-tuple, the \texttt{verify\textunderscore four\textunderscore tuples} function utilises the same Scaffolding-Tile Generation technique from {Algorithm 1.2} - using the Frame Constraints with the three inserted Tiles to generate the missing, lower Tile. Using Figure \ref{tile_computation} as an example, this would involve inputting Tiles A, B, and C as a scaffolding to generate Tile D in the top left portion of the diagram. This generated fourth Tile is then compared to the actual \texttt{value} of the fourth tile in the 4-tuple, which can be found using the Tile \texttt{id} stored in the lower portion of the 4-tuple. If the two Tiles do not have matching \texttt{value}s, then the verification of that 4-tuple is considered to have failed. \\

\noindent If any of the 4-tuples fail their verification, then the entire function exits with a failure result. However, if every single 4-tuple is processed successfully by the verification function, then the entire set of Tiles and substitution rules should be considered accurate for the sequence, finite field, and tile length.

~\\
~\\
~\\

\raisebox{-10ex}
{
    \begin{minipage}{7cm}
        \textbf{Samuel Garrett}\\
        samuel.garrett96@gmail.com
    \end{minipage}
} 
\raisebox{-10ex}
{
    \begin{minipage}{10cm}
        \textbf{Steven Robertson}\\
        School of Mathematics\\
        The University of Manchester\\
        Alan Turing Building\\
        Manchester, M13 9PL\\
        United Kingdom\\
        \texttt{steven.robertson@manchester.ac.uk}  
    \end{minipage}
} 
\end{document}